\newtheorem{thm}{Theorem}
\newtheorem{prp}{Proposition}
\newtheorem{defin}{Definition}
\newcommand{\sys}[4]{
\left\lgroup
\begin{array}{c|c}
  #1 & #2\\\hline
  #3 & #4
\end{array}
\right\rgroup
}
\def\q{{\bf{q}}}
\title{\LARGE \bf Optimal Control and Estimation of Partially Nested Interconnected Systems}
\author{Ather Gattami\thanks{
Ather Gattami is with the Automatic Control Laboratory, School of of Electrical Engineering,
KTH-Royal Institute of Technology, 100 44, Stockholm, Sweden. E-mail: gattami@kth.se.},
Sanjoy Mitter
\thanks{
Sanjoy Mitter is with the
Laboratory for Information and Decision Systems, Massachusetts
Institute of Technology, Cambridge, MA 02139, USA. E-mail:
mitter@mit.edu} }
\begin{document}
\maketitle
\begin{abstract}
In this paper, we study distributed estimation and control problems
over graphs under partially nested information patterns. We show a
duality result that is very similar to the classical duality
result between state estimation and state feedback control with a
classical information pattern, under the condition that the disturbances entering different systems on the graph are uncorrelated. The distributed estimation problem decomposes into $N$ separate estimation problems, where $N$ is the number of interconnected subsystems over the graph, and the solution to each subproblem is simply the optimal Kalman filter. This also gives the solution to the distributed control problem due to the duality of distributed estimation and control under partially nested information pattern. We then consider a weighted distributed estimation problem, where we get coupling between the estimators, and
separation between the estimators is not possible. We propose a solution based on linear quadratic team decision theory, which provides a generalized Riccati equation for teams. We show that the weighted estimation problem is the dual to the distributed state feedback problem, where the disturbances entering the interconnected systems are correlated. 
\end{abstract}

\begin{keywords}
Distributed Estimation and State Feedback Control, Duality.
\end{keywords}
%======================================================================
\section{Introduction}
\subsection{Background}
Control with information structures imposed on the decision
maker(s) have been very challenging for decision theory
researchers. Even in the simple linear quadratic static decision
problem, it has been shown that complex nonlinear decisions could
outperform any given linear decision (see
\cite{witsenhausen:1968}). Important progress was made for the
stochastic static team decision problems in \cite{marschak:1955}
and \cite{radner}. New information structures were explored in
\cite{ho:chu} for the stochastic linear quadratic finite horizon
control problem. Similar algebraic conditions where given in
\cite{bamieh:voulgaris:2005} for homogeneous systems.
In \cite{rantzer:acc06}, the stationary state
feedback stochastic linear quadratic control problem was considered
using state space formulation, under the condition that all the
subsystems have a \textit{common past}, with the difficulty of recovering the structure
of the distributed controller. With common past, we mean
that all subsystems have information about the \textit{global}
state from some time step in the past. The time-varying
and stationary output feedback version was solved in
\cite{gattami:tac:10}. Recently, nice studies of Partial
Nestedness in linear quadratic dynamic team problems appeared in
Yuksel \cite{yuksel-pn-09} and Mahajan \textit{et al}
\cite{mahajan:tatikonda:09}. The $n$-step delay problem is studied in \cite{teneketzis_2011}.
Duality between estimation and control for distributed control problems of heterogeneous systems under arbitrary sparsity 
and delay partially nested structure, was explored \cite{gattami:acc09}, where state-feedback control and estimation was shown to be solved by a set of independent Riccati equations. In particular, \cite{gattami:acc09} showed that optimal controllers have a finite order 
for any partially nested information strucutre. A state-space solution for $N$ systems with no delays was given in \cite{shah} with a different approach relying on partially ordered set formulation. The work in \cite{vamsi} considers realizable solutions in the presence of noise. 

\subsection{Contribution}
In this paper, we will show a duality result between distributed
state estimation and distributed state feedback control under partially nested information including delays, similar to the centralized estimation and state feedback problems. Since the distributed control and estimation problems are dual, we show how to find the optimal distributed estimator (and hence the optimal distributed state-feedback controller). The distributed estimation problem decomposes into $N$ separate estimation problems, where $N$ is the number of interconnected subsystems of the network. We give an explicit solution for the three interconnected systems' case under two different graphs. The paper is an extension of  \cite{gattami:acc09} where we consider a more general framework. The general framework includes a weighted distributed estimation problem, where we get coupling between the estimators, and separation between the estimators is not possible. We propose a solution based on linear quadratic team decision theory, which provides a generalized Riccati equation for teams. We show that the weighted estimation problem is the dual to the distributed state feedback problem, where the disturbances entering the interconnected systems are correlated. The solutions do not assume stable systems, and a stabilizing solution is obtained automatically when it exists.

%===============================================================================

\subsection{Notation}
Let $\mathbb{R}$ be the set of real numbers, $\mathbb{Z}_2 = \{0,1\}$, 
$\mathbb{S}_{++}^n$ is the set of $n\times n$ positive definite matrices. $x\sim
\mathcal{N}(m,X)$ means that $x$ is a Gaussian variable with
$\mathbf{E}\{x\}=m$ and $\mathbf{E}\{(x-m)(x-m)^T\}=X$.
$[M]_i$, denotes the block row or column $i$ of a matrix $M$ 
depending on the context. For a matrix $A$ partitioned into blocks,
$[A]_{ij}$ denotes the block matrix of $A$ in block position $(i,j)$. 
 $I_{n}$ is the $n\times n$ identity matrix.
For vectors $v_k, v_{k-1}, ..., v_0$, we define $v_{[0,k]}:=\{v_k,
v_{k-1}, ..., v_0\}$.
We denote a discrete-time (stochastic) process $x(0), x(1), x(2),...$ by
$\{x(t)\}$. 
 The forward shift operator is denoted by $\q$, that is
$x(t+1)=\q x(t)$. A causal linear time-invariant operator $H(\q)$
maps a process $\{x(t)\}$ to an output $y(t)$, where $y(t)=H(\q^{-1})x(t)$,
and $H(\q^{-1})$ is given by its generating function (\cite{stanley}),
$H(\q^{-1})=\sum_{t=0}^{\infty} h(t)\q^{-t}$, $h(t)\in\mathbb{R}^{m
\times n}$. The norm of $\|H(\q^{-1})\|$ is defined as
$\|H(\q^{-1})\|^2 = \mathbf{E} \|H(\q^{-1})w(t)\|^2 = \sum_{t=0}^{\infty} \|h(t)\|^2 =  \sum_{t=0}^{\infty} \mathbf{Tr} [h^T(t)h(t)]$,
where $\{w(t)\}$ is a sequence of uncorrelated Gaussian variables with
$w(t)\sim \mathcal{N}(0,I)$. 
A transfer matrix in terms of state-space data is
denoted
$$\sys A B C D := C(\q I-A)^{-1}B+D.$$
%------------------------------------------------------------------------
\section{Linear Quadratic Team Theory}
 \label{teams}
Define a probability space $(\Omega, \mathcal{F}, \mathcal{P})$. 
Let $y_i$ be $p_i$-dimensional random variables, for $i = 1, ..., N$, and set
$p = p_1 + \cdots p_N$. Let $\mathcal{F}_i$ be the sigma field
generated by $y_i$.  
Introduce, $\mathcal{H}$, the space  of all $nN\times N$ matrices whose elements are
measurable functions from $\Omega$ to $\mathbb{R}$. 
%with
%$$X={\bf diag}(x, ..., x).$$ 
Let $W\in \mathbb{S}_{++}^{N}$, and define 
\begin{equation}
\label{inner}
<H_1, H_2> = \mathbf{Tr}~\mathbf{E}\{H_1WH_2^T\}
\end{equation}
for $H_1, H_2\in \mathcal{H}$. Then,  $\mathcal{H}$ is a Hilbert space with inner product
(\ref{inner}) and norm $\|H\|_W^2=<H,H>$. Let $\mathcal{D}\subset \mathcal{H}$ such that
 for $D\in \mathcal{D}$, the $i$th column of $D$, $D_i$, is $\mathcal{F}_i$-measurable.
 
The columns of $D$, $D_1, ..., D_N$, make up a team, where the \emph{players} $D_i$ make decisions in local information given by $y_i$, to minimize a cost of the form

$$
<D-\Phi X,D-\Phi X>
$$
for some $\Phi\in \mathbb{R}^{nN\times nN}$ and $X\in \mathcal{H}$. \\

 \begin{prp}
 \label{orth}
Let $X\in \mathcal{H}$. The minimum of $<D-X,D-X>$ for $D\in \mathcal{D}$
is acheived by the unique $\hat{X}\in \mathcal{D}$ satisfying
$$
<\hat{X}-X,D> = 0
$$
for all $D\in \mathcal{D}$.
 \end{prp}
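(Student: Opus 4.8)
The statement is exactly the projection theorem for a closed linear subspace of the Hilbert space $\mathcal{H}$, so my plan is to confirm that $\mathcal{D}$ qualifies as such a subspace and then extract the orthogonality characterization, partly through a self-contained geometric argument. Linearity of $\mathcal{D}$ is immediate: if $D, D' \in \mathcal{D}$ and $\alpha, \beta \in \mathbb{R}$, the $i$th column of $\alpha D + \beta D'$ equals $\alpha D_i + \beta D'_i$, which is $\mathcal{F}_i$-measurable because the $\mathcal{F}_i$-measurable maps form a vector space; hence $\alpha D + \beta D' \in \mathcal{D}$.

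The geometric core is a completing-the-square identity that delivers both sufficiency and uniqueness at once. Assuming $\hat{X}\in\mathcal{D}$ satisfies $\langle \hat{X}-X, D\rangle = 0$ for all $D\in\mathcal{D}$, I would take an arbitrary $D\in\mathcal{D}$, split $D-X = (D-\hat{X}) + (\hat{X}-X)$, and expand
$$
\langle D-X, D-X\rangle = \langle D-\hat{X}, D-\hat{X}\rangle + 2\langle \hat{X}-X, D-\hat{X}\rangle + \langle \hat{X}-X, \hat{X}-X\rangle .
$$
Because $\mathcal{D}$ is a subspace, $D-\hat{X}\in\mathcal{D}$, so the middle term vanishes and $\|D-X\|_W^2 = \|D-\hat{X}\|_W^2 + \|\hat{X}-X\|_W^2 \ge \|\hat{X}-X\|_W^2$, with equality if and only if $D=\hat{X}$. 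Thus any $\hat{X}$ obeying the orthogonality relation is the unique minimizer. Conversely, a minimizer must satisfy orthogonality: for fixed $D\in\mathcal{D}$ the scalar function $t\mapsto \|\hat{X}+tD-X\|_W^2 = \|\hat{X}-X\|_W^2 + 2t\langle \hat{X}-X,D\rangle + t^2\|D\|_W^2$ is minimized at $t=0$, forcing $\langle \hat{X}-X,D\rangle = 0$.

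What remains is existence of a minimizer, which I would obtain from the Hilbert-space projection theorem once $\mathcal{D}$ is shown to be closed. For closedness, suppose $D^{(k)}\in\mathcal{D}$ with $\|D^{(k)}-D\|_W\to 0$. Since $W\in\mathbb{S}_{++}^{N}$, the form $\|H\|_W^2 = \mathbf{Tr}(W\,\mathbf{E}\{H^T H\})$ is sandwiched between $\lambda_{\min}(W)$ and $\lambda_{\max}(W)$ times the sum of the columns' $L^2$-norms, so convergence in $\|\cdot\|_W$ is equivalent to $L^2$-convergence of each column; passing to a subsequence then gives almost-sure convergence of the $i$th columns, and an almost-sure limit of $\mathcal{F}_i$-measurable functions is again $\mathcal{F}_i$-measurable, whence $D\in\mathcal{D}$.

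I expect the closedness of $\mathcal{D}$ to be the only genuine obstacle: linearity and the completing-the-square geometry are routine, but attainment of the infimum \emph{inside} $\mathcal{D}$—rather than mere approach to it—rests on the measurability constraints surviving $L^2$ limits, which is precisely what the subsequence-and-almost-sure-limit argument secures.
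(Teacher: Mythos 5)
Your proof is correct, but it cannot be ``the same approach as the paper'' because the paper offers no proof at all: Proposition~\ref{orth} is disposed of with the single line ``Consult \cite{barta:sandell}.'' What you have supplied is the standard Hilbert-space projection-theorem argument, and all three of its pieces check out: the completing-the-square identity (valid here because the form $\langle H_1,H_2\rangle = \mathbf{Tr}\,\mathbf{E}\{H_1WH_2^T\}$ is symmetric, since $W=W^T$, and positive definite, since $W\in\mathbb{S}_{++}^N$); the variational argument showing a minimizer must satisfy the orthogonality relation; and the existence step via closedness of $\mathcal{D}$. You also correctly identified the only point where the abstract theorem needs setting-specific work, namely that the measurability constraint defining $\mathcal{D}$ survives limits in $\|\cdot\|_W$: your norm-equivalence bound $\lambda_{\min}(W)\,\mathbf{Tr}\,\mathbf{E}\{H^TH\}\le\|H\|_W^2\le\lambda_{\max}(W)\,\mathbf{Tr}\,\mathbf{E}\{H^TH\}$ reduces $\|\cdot\|_W$-convergence to columnwise $L^2$-convergence, and the subsequence/almost-sure-limit argument is exactly the standard proof that $L^2(\mathcal{F}_i)$ is closed in $L^2(\mathcal{F})$ (equivalently, one could note $L^2(\mathcal{F}_i)$ is itself complete, hence closed). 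The trade-off is the obvious one: the paper's citation keeps the exposition short and leans on Barta--Sandell for the team-theoretic machinery it reuses in Propositions~\ref{sep}--\ref{lin}, whereas your version makes the result self-contained and makes visible that nothing about the team structure (the different $\mathcal{F}_i$ per column) is needed beyond $\mathcal{D}$ being a closed subspace. One cosmetic remark: in the closedness step you should say sum of \emph{squared} column $L^2$-norms, and note that the almost-sure limit along the subsequence gives an $\mathcal{F}_i$-measurable \emph{version} of the limiting column, which is what membership in $\mathcal{D}$ means for equivalence classes; neither affects the validity of the argument.
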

 \begin{proof}
 Consult \cite{barta:sandell}.
 \end{proof}

The following proposition gives a certainty equivalence property for
team problems:
 \begin{prp}
 \label{sep}
Let 
$$\hat{X} = \arg \min_{D\in \mathcal{D}} <D-X,D-X>.$$
Then, 
$$
\Phi \hat{X}= \arg \min_{D\in \mathcal{D}}<D-\Phi X,D-\Phi X>. 
$$
 \end{prp}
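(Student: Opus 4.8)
The plan is to reduce the claim to the orthogonality characterization of Proposition~\ref{orth}. That proposition says an element $Y\in\mathcal{D}$ is the minimizer of $<D-\Phi X,D-\Phi X>$ over $\mathcal{D}$ precisely when the residual $Y-\Phi X$ is orthogonal to every $D\in\mathcal{D}$, and that such a minimizer is unique. So I would take the candidate $Y=\Phi\hat{X}$ and verify the two defining properties: that $\Phi\hat{X}$ lies in $\mathcal{D}$, and that $<\Phi\hat{X}-\Phi X,D>=0$ for all $D\in\mathcal{D}$. Uniqueness then forces $\Phi\hat{X}$ to be the desired minimizer.

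First I would check membership. Since $\Phi\in\mathbb{R}^{nN\times nN}$ is a constant (deterministic) matrix, left multiplication by $\Phi$ acts column by column: the $i$th column of $\Phi\hat{X}$ is $\Phi\hat{X}_i$, and multiplying the $\mathcal{F}_i$-measurable vector $\hat{X}_i$ by a constant matrix preserves $\mathcal{F}_i$-measurability. Hence $\Phi\hat{X}\in\mathcal{D}$. The same argument shows that $\mathcal{D}$ is invariant under left multiplication by $\Phi^T$, a fact I will use immediately.

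The key step is the orthogonality computation, which rests on an adjoint identity for the weighted inner product. Writing $A=\hat{X}-X$, cyclicity of the trace gives
$$
<\Phi A, D> = \mathbf{Tr}\,\mathbf{E}\{\Phi A W D^T\} = \mathbf{Tr}\,\mathbf{E}\{A W D^T\Phi\} = <A, \Phi^T D>,
$$
since $D^T\Phi=(\Phi^T D)^T$. Because $\Phi^T D\in\mathcal{D}$ by the invariance just noted, Proposition~\ref{orth} applied to $\hat{X}$ yields $<\hat{X}-X,\Phi^T D>=0$. Therefore $<\Phi\hat{X}-\Phi X,D>=<\Phi(\hat{X}-X),D>=0$ for every $D\in\mathcal{D}$, and with both defining properties verified the uniqueness part of Proposition~\ref{orth} identifies $\Phi\hat{X}$ as the minimizer.

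The only genuinely substantive point — and the thing to get right — is that left multiplication by a deterministic matrix is self-adjoint in the sense that its adjoint with respect to $<\cdot,\cdot>$ is again left multiplication, by $\Phi^T$. This works precisely because $\Phi$ multiplies on the left while the weight $W$ sits on the right, so the two never interact, and because the information constraint defining $\mathcal{D}$ is imposed column-wise, which is exactly what left multiplication respects. Were the weighting or the constraint to act on the same side as $\Phi$, this clean certainty-equivalence would fail, so I expect verifying these two structural compatibilities to be the crux rather than any computation.
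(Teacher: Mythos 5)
Your proof is correct. There is nothing to compare it against line by line, because the paper offers no argument for this proposition at all: its stated ``proof'' is the single sentence ``Consult \cite{barta:sandell}.'' Your derivation supplies exactly what that citation hides, and it is the natural certainty-equivalence argument. The two structural facts you isolate are the right ones, and both are verified correctly: (i) $\mathcal{D}$ is invariant under left multiplication by any deterministic $nN\times nN$ matrix, since the constraint defining $\mathcal{D}$ is imposed column by column and the $i$th column of $\Phi\hat{X}$ is $\Phi\hat{X}_i$, which stays $\mathcal{F}_i$-measurable; (ii) the adjoint identity $<\Phi A, D> = <A,\Phi^T D>$, which follows from cyclicity of the trace precisely because $\Phi$ acts on the left while the weight $W$ sits on the right, so the two never collide. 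Feeding $\Phi^T D\in\mathcal{D}$ into the orthogonality condition for $\hat{X}$ and then invoking the uniqueness clause of Proposition~\ref{orth} (applied with target $\Phi X$) closes the argument. What your version buys is self-containedness: Proposition~\ref{sep} now follows from Proposition~\ref{orth} alone, with the useful byproduct that the result visibly holds for an arbitrary deterministic $\Phi\in\mathbb{R}^{nN\times nN}$, not only the structured $\Phi$ used later in Proposition~\ref{equi}. The one caveat, inherited from the paper's setup rather than introduced by you, is that elements of $\mathcal{H}$ are only required to be measurable, so finiteness of the inner products (square-integrability) is being assumed implicitly throughout; your argument, like the paper's framework, operates modulo that assumption.
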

 \begin{proof}
 Consult \cite{barta:sandell}.
 \end{proof}
 
\begin{prp}
\label{equi}
Let $u_i$ be $n$-dimensional vectors and  $L_i\in\mathbb{R}^{n\times n}$, for $i = 1, ..., N$, and $\Phi\in \mathbb{R}^{nN\times nN}$ with
$$
\Phi = 
\begin{bmatrix}
L_1 ~ \cdots ~ L_N\\
0
\end{bmatrix}.
$$
Let
$$
u^* = \arg \min_{u_i\in \mathcal{F}_i} \mathbf{E}\{(u-Lx)^TW(u-Lx)\}
$$
and 
$$
\hat{X}= \arg \min_{D\in \mathcal{D}}<D - X,D - X> 
$$
Then, 
$$
u_i^* = \sum_{j=1}^N = L_j \hat{X}_{ji}.
$$
 \end{prp}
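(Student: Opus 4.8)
The plan is to recognize the vector team problem defining $u^*$ as the \emph{top block} of the matrix team problem of Propositions~\ref{orth}--\ref{sep}, and then to read the answer off from the certainty-equivalence identity $\Phi\hat{X}=\arg\min_{D}<D-\Phi X,D-\Phi X>$. First I would make the bridge between the lowercase state $x=(x_1,\dots,x_N)$ and the matrix $X\in\mathcal{H}$ explicit by taking every column of $X$ equal to the full state, so that $X_i=x$ and $X_{ji}=x_j$; this is the choice that makes $\Phi X_i=\begin{bmatrix}Lx\\ 0\end{bmatrix}$ with $L=[L_1\ \cdots\ L_N]$. For a generic $D\in\mathcal{D}$ I would partition each column as $D_i=\begin{bmatrix} d_i \\ r_i\end{bmatrix}$ with $d_i\in\mathbb{R}^n$ the top block and $r_i$ the remaining $n(N-1)$ entries.

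Expanding the inner product (\ref{inner}) column-wise as $<E,E>=\mathbf{E}\{\sum_{i,k}W_{ik}E_i^TE_k\}$ (which follows from $\mathbf{Tr}(E_iE_k^T)=E_i^TE_k$ and the scalar entries $W_{ik}$), and substituting $E_i=(D-\Phi X)_i=\begin{bmatrix} d_i-Lx \\ r_i\end{bmatrix}$, I would show that the team cost splits additively,
\begin{equation*}
<D-\Phi X,D-\Phi X>=\mathbf{E}\Big\{\sum_{i,k}W_{ik}(d_i-Lx)^T(d_k-Lx)\Big\}+\mathbf{E}\Big\{\sum_{i,k}W_{ik}r_i^Tr_k\Big\},
\end{equation*}
the cross terms disappearing because the block inner product $E_i^TE_k$ of the stacked residuals contains no mixed $d$--$r$ products.

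Next I would dispose of the bottom blocks. Since $W\in\mathbb{S}_{++}^N$, the second sum is a positive-definite quadratic form in $(r_1,\dots,r_N)$ and is minimized, with value zero, by setting $r_i=0$ for every $i$; this choice is trivially $\mathcal{F}_i$-measurable, hence admissible in $\mathcal{D}$. Consequently the minimizer $D^\star=\arg\min_{D\in\mathcal{D}}<D-\Phi X,D-\Phi X>$ has vanishing bottom blocks, and the minimization reduces to choosing the top blocks $d_i$, which are $\mathcal{F}_i$-measurable exactly when the columns $D_i$ are. Identifying $u_i:=d_i$, the surviving top-block cost is precisely $\mathbf{E}\{(u-Lx)^TW(u-Lx)\}$ read with $W$ coupling the block residuals $u_i-Lx$, so that $u_i^*$ coincides with the top block of the $i$th column of $D^\star$.

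Finally I would invoke certainty equivalence: by Proposition~\ref{sep}, $D^\star=\Phi\hat{X}$ with $\hat{X}=\arg\min_{D\in\mathcal{D}}<D-X,D-X>$, and by the structure of $\Phi$ the top block of the $i$th column of $\Phi\hat{X}$ is $[L_1\ \cdots\ L_N]\hat{X}_i=\sum_{j=1}^N L_j\hat{X}_{ji}$, giving $u_i^*=\sum_{j=1}^N L_j\hat{X}_{ji}$. I expect the only delicate point to be the bookkeeping in the first two steps---making the correspondence between the vector cost $\mathbf{E}\{(u-Lx)^TW(u-Lx)\}$ and the matrix norm (\ref{inner}) exact and verifying the additive top/bottom split under the $W$-coupling---after which Proposition~\ref{sep} delivers the conclusion immediately.
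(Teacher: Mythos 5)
Your proof is correct, but it cannot be compared against a proof in the paper for a simple reason: the paper does not prove Proposition~\ref{equi} at all --- its ``proof'' is the single line ``Consult \cite{barta:sandell}.'' Your argument supplies the derivation the paper omits, and it is sound. The two things you make explicit are exactly the points the paper leaves dangerously implicit. First, the statement as written is dimensionally inconsistent ($u\in\mathbb{R}^{nN}$, $Lx\in\mathbb{R}^{n}$, $W\in\mathbb{S}_{++}^{N}$), and the claimed formula is true only under your reading: the weight couples the block residuals, $(u-Lx)^{T}W(u-Lx)=\sum_{i,k}W_{ik}(u_i-Lx)^{T}(u_k-Lx)$, and $X$ is the matrix whose every column equals the stacked state $x$, so that $X_{ji}=x_j$. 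Second, your top/bottom block split is the mechanism that identifies the vector team problem with the matrix one: writing $D_i=\bigl[\begin{smallmatrix}d_i\\ r_i\end{smallmatrix}\bigr]$, the cost separates (no $d$--$r$ cross terms in the columnwise expansion of the trace inner product) into the top-block team cost plus $\mathbf{E}\,\mathbf{Tr}(RWR^{T})=\mathbf{E}\|RW^{1/2}\|^{2}$, which positive definiteness of $W$ forces to vanish exactly at $R=0$, an admissible (constant) choice; hence the unique minimizer guaranteed by Proposition~\ref{orth} has zero bottom blocks and its top blocks solve the vector problem. Proposition~\ref{sep} then gives $D^{\star}=\Phi\hat{X}$, which is admissible since $\Phi$ is deterministic so each $\Phi\hat{X}_i$ remains $\mathcal{F}_i$-measurable, and the top block of its $i$th column is $L\hat{X}_i=\sum_{j=1}^{N}L_j\hat{X}_{ji}$, which is the assertion (the statement's ``$\sum_{j=1}^{N}=L_j\hat{X}_{ji}$'' contains a stray equality sign). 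In short: the paper's route is deferral to Barta--Sandell, while yours is a self-contained two-step reduction --- eliminate the bottom blocks by positive definiteness, then apply certainty equivalence --- and it is the natural proof of this proposition within the paper's own framework.
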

\begin{proof}
Consult \cite{barta:sandell}.\\
\end{proof}
\begin{defin}
$X$ is $W$-orthogonal to $Y$ if
$<X, Y> = 0$.
\end{defin}
\begin{defin}
For a sequence $\{Z_k\}_{Z_k\in \mathcal{D}}$,
$\{Z_k\}$ is called $W$-white noise if $<Z_k, Z_{l}> = 0$ for all $k\neq l$.
\end{defin}

Note that for $W=I$, we get the formal definition of white noise in the classical sense.

Now introduce the matrix
$$
Y = \mathbf{diag}( y_1, \cdots,  y_N).
$$

The next proposition shows how to obtain the linear optimal solution $D = KY$:
\begin{prp}
  \label{lin}
Let $X\in \mathcal{H}$. The minimum of $\|KY-X\|_W$  over $K\in \mathbb{R}^{nN\times p}$ 
with $KY\in \mathcal{D}$ is acheived by the unique $K^\star$  given by
$$
K^\star =  \mathbf{E} \{XWY^T\}  (\mathbf{E}  \{YWY^T\})^{-1}.
$$
 \end{prp}
 \begin{proof}
Consult \cite{barta:sandell}.
 \end{proof}

%Let 
%$$
%\hat{D_k}= \arg \min_{D\in \mathcal{D}}<D - X,D - X> 
%$$
%Define $\tilde{S}_0 = D_0$, and 
%$$
%\tilde{S}_k = D_k - 
%$$
%------------------------------------------------------------------------

\section{Systems over Graphs}
\label{systems_over_graphs}
Consider linear systems $P_i(\q^{-1})$
with state space realization
\begin{equation}
\label{IS}
\begin{array}{ll}
\begin{aligned}
x_i(t+1) &=\sum_{j=1}^{N} A_{ij}x_{j}(t)+B_iu_i(t)+w_i(t)\\
y_i(t)   &=C_ix_i(t) + v_i(t),
\end{aligned}
\end{array}
\end{equation}
for $i=1,...,N$. Here, $A_{ij}\in \mathbb{R}^{n_i\times n_j}$,
$B_{i}\in \mathbb{R}^{n_i\times m_i}$, and $C_i\in \mathbb{R}^{p_i\times n_i}$.
$w_i$ is the disturbance and $u_i$ is the control signal, entering system $i$. Also, we have
that $\sum_i m_i = m$,  $\sum_i n_i = n$,  $\sum_i p_i = p$.

The systems are interconnected as follows. If the state of system
$j$ at time step $t$ (i.e., $x_j(t)$) affects the state of system $i$ at time step $t+1$ (i.e., $x_i(t+1)$),
then $A_{ij}\neq 0$, otherwise $A_{ij}=0$. 
This block structure can be described by a graph\footnote{See the Appendix for
a short introduction to graph theory.} $\mathcal{G}$ of
order $N$, whose adjacency matrix is $\mathcal{A}$. The graph
$\mathcal{G}$ has an arrow from node $j$ to $i$ if and only if
$A_{ij}\neq 0$. The transfer function of the interconnected
systems is given by $P(\q^{-1})=C(\q I-A)^{-1}B$. Then, the system
$P^T(\q^{-1})$ is equal to $B^T(\q I-A^T)^{-1}C^T$, and it can be
represented by a graph $\mathcal{G^*}$ which is the adjoint of
$\mathcal{G}$, since the adjacency matrix of $\mathcal{G^*}$ is
$\mathcal{A}^*=\mathcal{A}^T$. The block diagram for the
transposed interconnection is simply obtained by reversing the
orientation of the interconnection arrows. This property was
observed in \cite{bernhardsson:1992}.

For any generating function $F(\lambda)$, we write the
generating function
\begin{equation*}
\begin{aligned}
G(\lambda)  &=(I-F(\lambda))^{-1}=\sum_{t\geq 0} (F(\lambda))^t.
\end{aligned}
\end{equation*}

\begin{defin}[Sparsity Structure] Let $m, n, N$ be integers with $m,n\geq N$, 
$\mathcal{A}\in \mathbb{Z}_2^{N\times N}$, and

$$S_{\mathcal{A}}^{m\times n}=\left\{\sum_{t\geq 0}g(t)\lambda^t \Big{|}
g(t)\in \mathbb{R}^{m\times n}, [\mathcal{A}^t]_{ij} = 0 \Rightarrow [g(t)]_{ij}=0 \right \}.$$ 

We say that $G(\lambda)$ has the sparsity structure given by $\mathcal{A}$ if $G(\lambda)\in S_{\mathcal{A}}$.\\
\end{defin}

\begin{thm}
\label{mult_generating_f}
Suppose that $G_1(\lambda)\in S_{\mathcal{A}}^{m\times p}$, $G_2(\lambda)\in S_{\mathcal{A}}^{p\times n}$ for a given adjacency matrix 
$\mathcal{A}\in \mathbb{Z}_2^{N\times N}$. Then $G_1(\lambda)G_2(\lambda) \in S_{\mathcal{A}}^{m\times n}$.
\end{thm}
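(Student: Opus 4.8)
The plan is to write the product as a single generating function and track its block sparsity coefficient by coefficient. Writing $G_1(\lambda)=\sum_{s\geq 0}g_1(s)\lambda^s$ and $G_2(\lambda)=\sum_{r\geq 0}g_2(r)\lambda^r$, the Cauchy product gives
$$
G_1(\lambda)G_2(\lambda)=\sum_{t\geq 0}h(t)\lambda^t,\qquad h(t)=\sum_{s+r=t}g_1(s)g_2(r).
$$
Since the inner dimension is partitioned as $p=p_1+\cdots+p_N$, the factors are block-conformable, so each $h(t)$ is a well-defined $m\times n$ matrix blocked into the same $N\times N$ pattern as $\mathcal{A}^t$. It then suffices to show that $[\mathcal{A}^t]_{ij}=0$ forces $[h(t)]_{ij}=0$ for every $t$ and every block index $(i,j)$.

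First I would expand the $(i,j)$ block by block multiplication,
$$
[h(t)]_{ij}=\sum_{s+r=t}\;\sum_{k=1}^{N}[g_1(s)]_{ik}\,[g_2(r)]_{kj}.
$$
By the sparsity hypotheses on $G_1$ and $G_2$ together with the definition of $S_{\mathcal{A}}$, a summand $[g_1(s)]_{ik}[g_2(r)]_{kj}$ can be nonzero only when simultaneously $[\mathcal{A}^s]_{ik}\neq 0$ and $[\mathcal{A}^r]_{kj}\neq 0$; otherwise at least one of the two blocks already vanishes.

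The key step is to rule out these surviving terms using the path-counting interpretation of the adjacency matrix. Because $\mathcal{A}$ has entries in $\{0,1\}$, every power $\mathcal{A}^s$ has nonnegative integer entries, and the semigroup identity $\mathcal{A}^{t}=\mathcal{A}^{s}\mathcal{A}^{r}$ for $s+r=t$ reads entrywise as
$$
[\mathcal{A}^t]_{ij}=\sum_{k=1}^{N}[\mathcal{A}^s]_{ik}\,[\mathcal{A}^r]_{kj}.
$$
This is a sum of nonnegative terms, so the assumption $[\mathcal{A}^t]_{ij}=0$ forces each product $[\mathcal{A}^s]_{ik}[\mathcal{A}^r]_{kj}$ to vanish; that is, for every $k$ and every split $s+r=t$ at least one of $[\mathcal{A}^s]_{ik}$, $[\mathcal{A}^r]_{kj}$ is zero. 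Consequently every block product in the expansion of $[h(t)]_{ij}$ is zero, and therefore $[h(t)]_{ij}=0$, which is exactly the claim.

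The only genuine obstacle is guaranteeing that no cancellation among the block products can create a nonzero entry in a forbidden position; this is precisely what the nonnegativity of the integer matrix $\mathcal{A}^s$ supplies, since it prevents the path-count $[\mathcal{A}^t]_{ij}$ from being zero while some intermediate two-hop count $[\mathcal{A}^s]_{ik}[\mathcal{A}^r]_{kj}$ is positive. I would also state explicitly that the block partitions of the $m$-, $p$-, and $n$-dimensional spaces are the ones induced by the $N$ subsystems, so that the block index $(i,j)$ matches the $N\times N$ index of $\mathcal{A}^t$ throughout the argument.
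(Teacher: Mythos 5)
Your proposal is correct and follows essentially the same route as the paper's own proof: expand the Cauchy product, use the entrywise identity $[\mathcal{A}^t]_{ij}=\sum_k[\mathcal{A}^s]_{ik}[\mathcal{A}^r]_{kj}$ together with nonnegativity of the integer entries to force each intermediate product to vanish, and then transfer this to the block products via the sparsity hypothesis. The only cosmetic difference is that the paper phrases the same argument in terms of the $i$th row of $\mathcal{A}^s$ dotted with the $j$th column of $\mathcal{A}^{t-s}$, whereas you index the sum over $k$ explicitly; the content is identical.
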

\begin{proof}
Let $G_1(\lambda)=\sum_{t\geq 0}g_1(t)\lambda^t$ and $G_2(\lambda)=\sum_{t\geq 0}g_2(t)\lambda^t$. 
Then, $G_3(\lambda) = G_1(\lambda)G_2(\lambda)=\sum_{t\geq 0}g_3(t)\lambda^t$, where 
$g_3(t)=\sum_{s= 0}^t  g_1(s)g_2(t-s)$. Let $r$ denote the $i$:th row of 
$\mathcal{A}^s$ and $c$ denote the $j$:th column of $\mathcal{A}^{t-s}$. Then 
$\mathcal{A}^t_{ij} = [\mathcal{A}^s\cdot \mathcal{A}^{t-s}]_{ij}= r\cdot  c$. Now $\mathcal{A}^t_{ij} = 0$, implies 
that $r\cdot c= 0$. Since $r$ and $c$ consist of non-negative integers, we have either $r_k=0$ or $c_k=0$, for all $k$. 
In an analog manner, let  $u$ denote the $i$:th block row of $g_1(s)$ and $v$ the $j$:th block column of
$g_2(t-s)$. Clearly, $r_j = [\mathcal{A}^s]_{ij} = 0$ implies that $u_j=0$, and $c_i=[\mathcal{A}^{(t-s)}]_{ij} = 0$ implies 
that $v_i=0$. Thus, for all $k$, either $u_k$ or $v_k$ is zero, that is $u_kv_k=0$. Hence,
$[g_1(s)g_2(t-s)]_{ij}=u\cdot v = u_1v_1+ \cdots + u_Nv_N =0$, and so $[g_3(t)]_{ij}=0$. We conclude that
$[\mathcal{A}^t]_{ij} = 0 \Rightarrow [g_3(t)]_{ij}=0$, and so 
$G_3(\lambda) = \sum_{t\geq 0}g_3(t)\lambda^t\in S_{\mathcal{A}}^{m\times n}$.\\
\end{proof}

\begin{thm}
\label{invariance}
Let $\mathcal{A}$ be a given adjacency matrix,
%$A(\lambda)\in \mathcal{S}_{\mathcal{A}}^{n\times n}$, 
%$B(\lambda)\in \mathcal{S}_{\mathcal{A}}^{n\times m}$, and 
$H_1(\lambda) \in \mathcal{S}_{\mathcal{A}}^{n\times n}$, and 
$H_2(\lambda)\in \mathcal{S}_{\mathcal{A}}^{m\times n}$. Then
$H_2(\lambda)(I-H_1(\lambda))^{-1}\in \mathcal{S}_{\mathcal{A}}^{m\times n}.$
\end{thm}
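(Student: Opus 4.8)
The plan is to expand the inverse as a Neumann (geometric) series and thereby reduce the whole statement to the multiplication rule already proved in Theorem \ref{mult_generating_f}. Using the expansion stated just above the sparsity definition, I would write
$$
H_2(\lambda)(I-H_1(\lambda))^{-1} = \sum_{k\geq 0} H_2(\lambda)H_1(\lambda)^k,
$$
so that it suffices to show that every summand lies in $S_{\mathcal{A}}^{m\times n}$ and that this class is stable under the (formal) infinite sum.

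First I would establish by induction on $k$ that $H_1(\lambda)^k \in S_{\mathcal{A}}^{n\times n}$. For the base case $k=0$ we have $H_1(\lambda)^0 = I$ and $\mathcal{A}^0 = I_N$, so the only admissible nonzero blocks are the diagonal ones, which is precisely the block structure of the identity; hence $I \in S_{\mathcal{A}}^{n\times n}$. For the inductive step, if $H_1(\lambda)^k \in S_{\mathcal{A}}^{n\times n}$, then since $H_1(\lambda)\in S_{\mathcal{A}}^{n\times n}$ by hypothesis, Theorem \ref{mult_generating_f} gives $H_1(\lambda)^{k+1} = H_1(\lambda)\,H_1(\lambda)^k \in S_{\mathcal{A}}^{n\times n}$. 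Applying Theorem \ref{mult_generating_f} once more to the product $H_2(\lambda)H_1(\lambda)^k$, with $H_2(\lambda)\in S_{\mathcal{A}}^{m\times n}$, yields $H_2(\lambda)H_1(\lambda)^k \in S_{\mathcal{A}}^{m\times n}$ for every $k$.

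It then remains to check closure under the infinite sum. Writing $H_2(\lambda)H_1(\lambda)^k = \sum_{t\geq 0} g^{(k)}(t)\lambda^t$, the support condition $[\mathcal{A}^t]_{ij}=0 \Rightarrow [g^{(k)}(t)]_{ij}=0$ holds for each $k$ and each $t$. The total series has coefficient $g(t) = \sum_{k\geq 0} g^{(k)}(t)$, and since the support condition is inherited by any sum of matrices that individually satisfy it, we get $[\mathcal{A}^t]_{ij}=0 \Rightarrow [g(t)]_{ij}=0$, placing $H_2(\lambda)(I-H_1(\lambda))^{-1}$ in $S_{\mathcal{A}}^{m\times n}$.

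The step I expect to require the most care is the well-definedness of this infinite sum as a genuine generating function: for $g(t)$ to be a finite matrix, only finitely many $k$ may contribute to degree $t$. This is guaranteed when $H_1(\lambda)$ has no constant term, so that $H_1(\lambda)^k$ — and hence $H_2(\lambda)H_1(\lambda)^k$ — has order at least $k$ and $g^{(k)}(t)=0$ for $k>t$, making $g(t)=\sum_{k=0}^{t} g^{(k)}(t)$ finite. Since the excerpt defines $(I-H_1(\lambda))^{-1}$ precisely through the convergent series $\sum_{k\geq 0}H_1(\lambda)^k$, this is exactly the regime in which the claim is asserted, and the support argument above then closes the proof.
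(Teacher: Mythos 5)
Your proposal is correct and follows essentially the same route as the paper: expand $(I-H_1(\lambda))^{-1}$ as the Neumann series $\sum_{k\geq 0}H_1(\lambda)^k$ and apply Theorem \ref{mult_generating_f} recursively to place each term $H_2(\lambda)H_1(\lambda)^k$ in $S_{\mathcal{A}}^{m\times n}$. You additionally spell out details the paper leaves implicit --- the base case $I\in S_{\mathcal{A}}^{n\times n}$ via $\mathcal{A}^0=I_N$, closure of the sparsity class under the (formal) infinite sum, and the well-definedness of that sum when $H_1(\lambda)$ has no constant term --- which strengthens rather than changes the argument.
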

\begin{proof} Let $H_3(\lambda)=H_2(\lambda)(I-H_1(\lambda))^{-1}\in \mathcal{S}_{\mathcal{A}}^{m\times n}.$
The formal power series of $H_3(\lambda)$ is
$$
H_3(\lambda)=\sum_{s\geq 0} H_2(\lambda)(H_1(\lambda)))^{s}.
$$ 
Recursive use of Theorem \ref{mult_generating_f} implies that 
$H_2(\lambda)(H_1(\lambda)))^{s}\in S_\mathcal{A}^{m\times n}$ for all $s\geq 0$.
Hence, $H_3(\lambda)\in S_\mathcal{A}^{m\times n}$, and the proof is complete.\\
\end{proof}

{\bf Remark.} Theorem \ref{mult_generating_f} gives a more general invariance property than
Quadratic Invariance \cite{rotkowitz:lall:2006} in our case. We show that the structure of $H_1(\lambda)$ of 
$H_2(\lambda)$ is preserved under multiplication, and Theorem  \ref{invariance} shows that $H_1(\lambda)$ under negative feedback of $H_2(\lambda)$, the structure of the closed loop $H_2(\lambda)(I-H_1(\lambda))^{-1}$ is preserved. For Quadratic Invariance, $K(\lambda)(I-G(\lambda)K(\lambda))^{-1}$ has the same structure as $G(\lambda)$ and $K(\lambda)$ if and only if $K(\lambda)G(\lambda)K(\lambda)$ has the same structure as $K(\lambda)$ and $G(\lambda)$. In our case, taking $H_2(\lambda) = K(\lambda)$ and $H_1(\lambda) = G(\lambda)K(\lambda)$, then if $G(\lambda)$ and $K(\lambda)$ have the
same structure, then so does $G(\lambda)K(\lambda)=H_1(\lambda)$. It implies that both $H_2(\lambda)H_1(\lambda)=K(\lambda)G(\lambda)K(\lambda)$ and $H_2(\lambda)(I-H_1(\lambda))^{-1}$ have the the same structure as $G(\lambda)$ and $K(\lambda)$.

%
%\begin{figure}
%\begin{center}
%\includegraphics[width=0.7\hsize, clip=true, trim = 0mm 20mm 0mm 15mm]{graphprimal.eps}
%%\psfig{file=graph.eps,width=\hsize}
%\end{center}
%\caption{An example of an interconnection graph $\mathcal{G}$.}
%%The graph reflects the interconnection structure of the
%%  dynamics between four systems. The arrow from node 2 to node 1
%%  indicates that system 1 affects the dynamics of system 2 directly.}
%\label{graphprimal}
%\end{figure}
%For instance, the adjacency matrix of the
%graph in Figure \ref{graphprimal} is
%$$
%\mathcal{A}=\left\lgroup\begin{matrix}
%1 & 0 & 1 & 0\\
%1 & 1 & 0 & 0\\
%0 & 1 & 1 & 1\\
%0 & 0 & 0 & 1
%\end{matrix}\right\rgroup.
%$$
%

%===============================================================================

\section{Duality of Estimation and Control}
\subsection{Distributed State Feedback Control}
Consider the interconnected systems
\begin{equation}
\label{sfbIS}
\begin{array}{ll}
\begin{aligned}
x_i(t+1) &=\sum_{j=1}^{N} A_{ij}x_{j}(t)+B_{ii}u_i(t)+w_i(t)\\
y_i(t)   &=x_i(t),
\end{aligned}
\end{array}
\end{equation}
$w(t)\sim \mathcal{N}(0,I)$ for all $t$, and $x(t)=0$ for all
$t\leq 0$. Without loss of generality, we assume that $B_{ii}$ has
full column rank, for $i=1, ..., N$ (and hence has a left
inverse). 

The problem we are considering here is to find the optimal
distributed state feedback control
\begin{equation}
\label{lc} u_i(t) = K_i(\q^{-1})x(t) = \sum_{s=0}^{\infty} k_i(s) x(t-s),
\end{equation}
for $i=1, ..., N$ that minimizes the quadratic cost
$$J(x,u):=\lim_{M\rightarrow \infty}\frac{1}{M}\sum_{t=1}^M \mathbf{E} \|Cx(t)+Du(t)\|^2,$$
The partially nested information pattern is reflected in the parameters $k_i(s)$,
where $k_{ij}(s)=0$ if $[\mathcal{A}^s]_{ij}=0$, and
$\mathcal{A}\in \mathbb{Z}_2^{N\times N}$ is the adjacency matrix
of the interconnection graph . Thus, the block sparsity structure
of $K(\q^{-1})$ is the same as the sparsity structure of
$$(I-\q^{-1}\mathcal{A})^{-1}=I +\mathcal{A}\q^{-1}+\mathcal{A}^2\q^{-2}+\cdots ,$$ 
and so $K(\lambda)\in S_{\mathcal{A}}^{m\times n}$. 
To summarize, the problem we are considering is:
\begin{equation}
\label{controlP}
    \begin{aligned}
        \inf_{K(\lambda)\in S_{\mathcal{A}}^{m\times n}}\hspace{2mm} & \lim_{M\rightarrow \infty} \frac{1}{M}\sum_{t=1}^M \sum_{i=1}^N \mathbf{E} \|z_i(t)\|^2 \\
        \text{subject to }\hspace{2mm} & x(t+1) = Ax(t)+Bu(t)+w(t)\\
        & z(t)=Cx(t)+Du(t)\\
        & B= \text{diag}(B_{11}, ..., B_{NN})\\
        & u(t)=\sum_{s=0}^{\infty} k(s) x(t-s)\\
        & w(t)=x(t)=x(0)=0\hspace{2mm} \text{for all } t < 0\\%,\hspace{2mm} w(k)\sim \mathcal{N}(0,I)\\
        & w(t)\sim \mathcal{N}(0,I)\hspace{2mm} \text{for all } t\geq 0 
    \end{aligned}
\end{equation}

%-----------------------------------------------------------------------------------------------------------
\subsection{Distributed Feedforward Control}
The feedforward control problem is closely related to the state-feedback problem:
\begin{equation}
\label{feedforward}
    \begin{aligned}
        \inf_{G(\lambda)\in S_\mathcal{A}^{m\times n}}\hspace{2mm} & \lim_{M\rightarrow \infty} \frac{1}{M}\sum_{t=1}^M \sum_{i=1}^N \mathbf{E} \|z_i(t)\|^2 \\
        \text{subject to }\hspace{2mm} & x(t+1) = Ax(t)+Bu(t)+w(t)\\
        & B= \text{diag}(B_{11}, ..., B_{NN})\\
        & z(t)=Cx(t)+Du(t)\\
        & u(t)=-\sum_{s=0}^{\infty} g(s)w(t-1-s)\\
        & w(t)=x(t)=x(0)=0\hspace{2mm} \text{for all } t < 0\\%,\hspace{2mm} w(k)\sim \mathcal{N}(0,I)\\
        & w(t)\sim \mathcal{N}(0,I)\hspace{2mm} \text{for all } t\geq 0
   \end{aligned}
\end{equation}
Note that (\ref{controlP}) and (\ref{feedforward}) are not equivalent in general, since
the latter only uses information about that external signals, $w$, entering the system, whereas 
for more restrictive informations structures, the control signals could carry information 
(see \cite{witsenhausen:1968} and \cite{ho:chu}).

%-----------------------------------------------------------------------------------------------------------
\subsection{Distributed State Estimation}
Consider $N$ systems given by
\begin{equation}
\label{estIS}
\begin{array}{ll}
\begin{aligned}
x(t+1) &=Ax(t)+Bw(t)\\
y_i(t) &=C_{ii}x_i(t)+D_iw(t),
\end{aligned}
\end{array}
\end{equation}
for $i=1, ..., N$, $w(t)\sim \mathcal{N}(0,I)$, and $x(t)=0$ for
all $t\leq 0$. Without loss of generality, we assume that $C_i$
has full row rank, for $i=1, ..., N$. The problem is to find
optimal distributed estimators $\hat{x}_i(t)$  to minimize the cost
\begin{equation}
\label{decEst} \lim_{M\rightarrow \infty}
\frac{1}{M}\sum_{t=1}^M \sum_{i=1}^N \mathbf{E} \|x_i(t)-\hat{x}_i(t)\|^2
\end{equation}
In a similar way to the distributed state feedback problem, the
information pattern is the partially nested, which is reflected by
the interconnection graph, so $L(\lambda)\in S_{\mathcal{A}}^{n\times m}$. The linear decisions are optimal, hence
we can assume that
\begin{equation}
\label{le}
    \begin{aligned}
        \hat{x}_i(t) &= L({\q^{-1}})y(t-1) = \sum_{s=0}^{\infty} l_i(s)y(t-1-s).
    \end{aligned}
\end{equation}
Then, our problem becomes
\begin{equation}
\label{estimationP}
    \begin{aligned}
        \inf_{L(\lambda)\in S_{\mathcal{A}}^{n\times m}}\hspace{2mm} & \lim_{M\rightarrow \infty} \frac{1}{M}\sum_{t=1}^M \sum_{i=1}^N \mathbf{E} \|x_i(t)-\hat{x}_i(t)\|^2 \\
        \text{subject to }\hspace{2mm}
        & x(t+1) = Ax(t)+Bw(t)\\
        & y(t)=Cx(t)+Dw(t)\\
        & C= \text{diag}(C_{11}, ..., C_{NN})\\
        &\hat{x}(t)=\sum_{s=0}^{\infty}l(s) y(t-1-s)\\
        & w(t)=x(t)=x(0)=0\hspace{2mm} \text{for all } t < 0\\%,\hspace{2mm}
        & w(t)\sim \mathcal{N}(0,I)\hspace{2mm} \text{for all } t\geq 0
    \end{aligned}
\end{equation}

In the next section, we will show the connection between the three problems that were introduced in this section.
%========================================================================
\section{Duality Results}

\subsection{Duality of State-Feedback and Feedforward Control}

\begin{thm}
\label{equivalencethm}
The problems  (\ref{controlP}) and (\ref{feedforward})  are in bijection.
\end{thm}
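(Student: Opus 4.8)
The plan is to reduce the claimed bijection to an identity between the closed-loop maps from the disturbance $w$ to the control $u$. In both (\ref{controlP}) and (\ref{feedforward}) the plant $x(t+1)=Ax(t)+Bu(t)+w(t)$, $z(t)=Cx(t)+Du(t)$, the white input $\{w(t)\}$, and the zero initial data are identical; hence the cost depends only on the closed-loop map $T_{zw}(\q^{-1})$ from $w$ to $z$, and by the norm defined in the Notation it equals $\|T_{zw}\|^2$ (both sides being simultaneously infinite in the non-stabilizing case). Since $z$ is fixed by $w$ and $u$ through the common plant, it suffices to match the map $\Phi(\q^{-1})$ from $w$ to $u$. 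For a state-feedback law $u=K(\q^{-1})x$ with $K\in S_{\mathcal{A}}^{m\times n}$ the closed loop gives $x=(\q I-A-BK)^{-1}w$, so $\Phi^{\mathrm{sf}}=K(\q I-A-BK)^{-1}$, while a feedforward law $u(t)=-\sum_{s\ge0}g(s)w(t-1-s)$ gives $\Phi^{\mathrm{ff}}=-\q^{-1}G(\q^{-1})$. I therefore define the correspondence $K\leftrightarrow G$ by $\Phi^{\mathrm{sf}}=\Phi^{\mathrm{ff}}$, that is
\[
 -\q^{-1}G=K(\q I-A-BK)^{-1}.
\]

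For the forward direction, given $K\in S_{\mathcal{A}}^{m\times n}$ I solve this for $G$ and, using that $K$ commutes with $\q$, rewrite it as $G=-K\bigl(I-\q^{-1}(A+BK)\bigr)^{-1}$. I would then show $\q^{-1}(A+BK)\in S_{\mathcal{A}}^{n\times n}$: the term $\q^{-1}A$ contributes the coefficient $A$ at the first power, whose pattern is $\mathcal{A}$ by the definition of the interconnection graph, and $\q^{-1}BK\in S_{\mathcal{A}}$ because $B$ is block diagonal, $K\in S_{\mathcal{A}}$ (so $BK\in S_{\mathcal{A}}$ by Theorem \ref{mult_generating_f}), and the unit delay preserves $S_{\mathcal{A}}$ here by the monotonicity $[\mathcal{A}^{s+1}]_{ij}=0\Rightarrow[\mathcal{A}^{s}]_{ij}=0$ of the reachability pattern. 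With $H_1=\q^{-1}(A+BK)$ and $H_2=K$, Theorem \ref{invariance} then yields $K(I-H_1)^{-1}\in S_{\mathcal{A}}^{m\times n}$, so $G\in S_{\mathcal{A}}^{m\times n}$ and the induced feedforward law is feasible. A degree count ($\q^{-1}$ times a strictly proper factor, scaled by $\q$) confirms $G$ is proper, i.e.\ causal.

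For the inverse direction I solve the same identity for $K$. From $u=-\q^{-1}Gw$ the feedforward closed loop is $x=(\q I-A)^{-1}(I-\q^{-1}BG)w$, and eliminating $w$ gives
\[
 K=-G\bigl(I-\q^{-1}BG\bigr)^{-1}\bigl(I-\q^{-1}A\bigr).
\]
Here $(I-\q^{-1}A)\in S_{\mathcal{A}}^{n\times n}$ (coefficients $I$ and $-A$), and $\q^{-1}BG\in S_{\mathcal{A}}$ by the same argument as above, so Theorem \ref{invariance} gives $G(I-\q^{-1}BG)^{-1}\in S_{\mathcal{A}}^{m\times n}$ and Theorem \ref{mult_generating_f} then gives $K\in S_{\mathcal{A}}^{m\times n}$; the same degree count shows $K$ is proper. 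The two formulas are algebraic inverses of one another by construction, since both encode the single relation $\Phi^{\mathrm{sf}}=\Phi^{\mathrm{ff}}$ together with the plant equations; hence composing them returns the identity and the correspondence is a bijection between the feasible sets. Because corresponding pairs realize the same $\Phi$ and therefore the same $T_{zw}$, they attain the same cost, so the bijection matches feasible points with equal objective value and identical stabilizing behaviour.

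The main obstacle is the structural bookkeeping, specifically that the unit delay $\q^{-1}$ must keep a factor inside $S_{\mathcal{A}}$: this is exactly the monotonicity $[\mathcal{A}^{s+1}]_{ij}=0\Rightarrow[\mathcal{A}^{s}]_{ij}=0$, which holds when $\mathcal{A}$ is reflexive (each subsystem observes its own state), the natural standing assumption; together with the block-diagonality of $B$ this is what lets Theorems \ref{mult_generating_f} and \ref{invariance} apply to the closed-loop expressions above. The remaining care is only to confirm properness/causality of the induced laws and the existence of the formal-power-series inverses $(\q I-A-BK)^{-1}$ and $(I-\q^{-1}BG)^{-1}$, both of which are routine.
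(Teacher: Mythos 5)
Your proposal is correct and follows essentially the same route as the paper: it equates the closed-loop map from $w$ to $u$, derives the same two formulas $G=-K\bigl(I-\q^{-1}(A+BK)\bigr)^{-1}$ and $K=-G\bigl(I-\q^{-1}BG\bigr)^{-1}\bigl(I-\q^{-1}A\bigr)$ (modulo the paper's sign conventions), and invokes Theorems \ref{mult_generating_f} and \ref{invariance} in both directions to get structure preservation and hence the bijection. The only additions are refinements the paper leaves implicit, namely the equality of costs for corresponding controllers and the reflexivity of $\mathcal{A}$ (guaranteed by the paper's convention of one loop at every node) needed for the delay factor to stay in $S_{\mathcal{A}}$.
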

\begin{proof} First write
\begin{equation}
    \begin{aligned}
        x(t) &=(\q I-A-BK(\q^{-1}))^{-1}w(t)\\
            &=(I-A\q^{-1}-BK(\q^{-1})\q^{-1})^{-1}\q^{-1}w(t).
    \end{aligned}
\end{equation}
Then
 $$u(t)= -K(\q^{-1})(I-A\q^{-1}-BK(\q^{-1})\q^{-1})^{-1} w(t-1).$$
Set $H_1(\lambda) = A\lambda+BK(\lambda)\lambda$ and $H_2(\lambda)=-K(\lambda)$.
Since $B\lambda \in \mathcal{S}_{\mathcal{A}}^{n\times m}$, Theorem \ref{mult_generating_f} implies that $B\lambda K(\lambda)\in \mathcal{S}_{\mathcal{A}}^{n\times n}$, and thus 
$H_1(\lambda)\in \mathcal{S}_{\mathcal{A}}^{n\times n}$. Now applying Theorem
\ref{invariance}, we get %(with $H_3(\lambda)=G(\lambda)$)
$$
G(\lambda) = -K(\lambda)(I-A\lambda-BK(\lambda)\lambda)^{-1}\in \mathcal{S}_{\mathcal{A}}^{m\times n}.
$$
In a similar way, we find that
$$
K(\q^{-1}) = G(\q^{-1})(I-A\q^{-1}-BK(\q^{-1})\q^{-1})
$$
$$\Updownarrow$$
$$
K(\q^{-1}) + G(\q^{-1})BK(\q^{-1})\q^{-1}= G(\q^{-1})(I-A\q^{-1})
$$
$$\Updownarrow$$
\begin{equation}
\begin{aligned}
K(\q^{-1}) 	&= (I+G(\q^{-1})B\q^{-1})^{-1}G(\q^{-1})(I-A\q^{-1})\\
		&= G(\q^{-1})(I+B\q^{-1}G(\q^{-1}))^{-1}(I-A\q^{-1}).
\end{aligned}
\end{equation}
Applying theorems \ref{mult_generating_f} and \ref{invariance} to the generating function above shows that
$G(\lambda)\in S_\mathcal{A}^{m\times n} \Rightarrow K(\lambda)\in S_\mathcal{A}^{m\times n}$. Hence, there is a bijection between the two controllers $K$ and $G$, and the proof is complete.
\end{proof}

\subsection{Duality of Distributed Estimation and Feedforward Control}
\begin{thm}
\label{estimationthm}
Consider the distributed feedforward linear quadratic problem
(\ref{controlP}), with state space realization
$$
\sys {A} {\begin{matrix} I & B\end{matrix}} {\begin{matrix} C \\
0\end{matrix}} {\begin{matrix} 0 & D\\ -I & 0\end{matrix}}
$$
and solution $u(t)=-\sum_{s=0}^\infty g(s)w(t-s)$, $\sum_{t=0}^{\infty} g(t)\lambda^{t} \in S_\mathcal{A}^{m\times n}$, and the
distributed estimation problem (\ref{estimationP}) with state
space realization
$$
\sys {A^T} {\begin{matrix} C^T & 0\end{matrix}} {\begin{matrix} I
\\ B^T\end{matrix}} {\begin{matrix} 0 & -I\\ D^T & 0\end{matrix}}
$$
and solution $\hat{x}(t)=\sum_{s=0}^\infty l(s)y(t-s-1)$, $\sum_{t=0}^{\infty} l(t)\lambda^{t} \in S_{\mathcal{A}^T}^{n\times m}$.
Then, for all $s$, $g(s)=l^T(s)$.
\end{thm}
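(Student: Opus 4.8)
The plan is to construct an explicit bijection between the feasible sets of the two problems that preserves the objective exactly, the bijection being transposition of the decision variable, and then to invoke uniqueness of the optimizer to upgrade equality of optimal values to the pointwise identity $g(s)=l^T(s)$. By Theorem \ref{equivalencethm} it is enough to treat the feedforward form of the control problem. The first observation is purely algebraic: using $\left(C(\q I-A)^{-1}B+D\right)^T=B^T(\q I-A^T)^{-1}C^T+D^T$ together with $\begin{bmatrix}0&D\\-I&0\end{bmatrix}^T=\begin{bmatrix}0&-I\\D^T&0\end{bmatrix}$, the estimation realization is exactly the transpose $P^T$ of the control realization $P$.

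Next I would write both closed loops as lower linear-fractional maps. Partitioning $P$ into its four transfer blocks, the measurement channel of the control plant satisfies $P_{21}=-I$, $P_{22}=0$, so with the admissible controller $K=\q^{-1}G(\q^{-1})$ (the one-step delay reproducing $u(t)=-\sum_s g(s)w(t-1-s)$) the map from $w$ to $z$ is $T_{\mathrm{ctrl}}(G)=P_{11}+P_{12}KP_{21}$, and likewise for the transposed plant. The key identity is that transposition commutes with this interconnection: from $\left[(I-P_{22}K)^{-1}\right]^T=(I-K^TP_{22}^T)^{-1}$ and the push-through identity $(I-K^TP_{22}^T)^{-1}K^T=K^T(I-P_{22}^TK^T)^{-1}$ one gets $\left(\mathcal{F}_l(P,K)\right)^T=\mathcal{F}_l(P^T,K^T)$. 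Since $\q^{-1}$ is a scalar and hence invariant under transposition, $K^T=\q^{-1}G^T$ is precisely the admissible estimator producing $\hat x(t)=\sum_s l(s)y(t-1-s)$ with $L=G^T$, so this identity specializes to $T_{\mathrm{est}}(G^T)=\left(T_{\mathrm{ctrl}}(G)\right)^T$.

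Two facts then close the equivalence. The objective is the $H_2$ norm of the closed loop in the sense of the Notation section, and $\|H\|^2=\sum_t\mathbf{Tr}[h^T(t)h(t)]=\sum_t\mathbf{Tr}[h(t)h^T(t)]=\|H^T\|^2$, so corresponding points carry equal cost. For the constraints, $G(\lambda)\in S_{\mathcal A}^{m\times n}$ reads $[\mathcal A^t]_{ij}=0\Rightarrow[g(t)]_{ij}=0$; since $[(\mathcal A^T)^t]_{ij}=[\mathcal A^t]_{ji}$ and $[g^T(t)]_{ij}=[g(t)]_{ji}$, the map $G\mapsto G^T$ is a bijection of $S_{\mathcal A}^{m\times n}$ onto $S_{\mathcal A^T}^{n\times m}$. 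This is exactly the setting in which the sparsity algebra of Theorems \ref{mult_generating_f} and \ref{invariance}, already used to certify that the optimal $G$ and $L$ remain in these sets, applies. Hence $G\mapsto G^T$ is an objective-preserving bijection from the feedforward feasible set onto the estimation feasible set, and it carries the optimizer of one problem onto the optimizer of the other.

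The step I expect to be the main obstacle is the final logical move, namely deducing the pointwise relation $g(s)=l^T(s)$ rather than mere equality of optimal values. For this I would invoke strict convexity of the quadratic $H_2$ objective over the closed, affine sparsity-constrained feasible set, which makes the optimizer of each problem unique; the cost-preserving bijection then forces the unique estimation optimizer to be the transpose of the unique control optimizer. A secondary point deserving care is the bookkeeping of delays and causality, so that the transpose of an admissible controller is genuinely an admissible estimator; as noted above this works because the common $\q^{-1}$ factor is transposition-invariant, matching the delay in $u(t)=-\sum_s g(s)w(t-1-s)$ to that in $\hat x(t)=\sum_s l(s)y(t-1-s)$.
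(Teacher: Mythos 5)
Your proposal is correct and takes essentially the same route as the paper: the paper's proof is exactly the computation that the closed-loop error map of the feedforward problem, when transposed, becomes the estimation-error map of the transposed plant (your identity $(\mathcal{F}_l(P,K))^T=\mathcal{F}_l(P^T,K^T)$ with $P_{22}=0$), combined with invariance of the $H_2$-type norm under transposition. The only difference is that you make explicit two points the paper leaves implicit --- that $G\mapsto G^T$ is a bijection of $S_{\mathcal A}^{m\times n}$ onto $S_{\mathcal A^T}^{n\times m}$, and that passing from equal optimal values to the pointwise identity $g(s)=l^T(s)$ rests on uniqueness of the optimizer (which, strictly speaking, needs a nondegeneracy condition such as $C(\q I-A)^{-1}B+D$ having trivial null space, an assumption neither you nor the paper states).
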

\begin{proof} 
Introduce an uncorrelated Gaussian process $\bar{w}(t)\sim \mathcal{N}(0,I)$ with proper dimensions. 
For any transfer function $F$, we have that $\mathbf{E}\|F(\q^{-1})\bar{w}(t)\|^2 = \|F(\q^{-1})\|^2$. Using this fact we see that
each term in the quadratic cost of (\ref{controlP}) can be written as
%\begin{equation}
%\label{quadraticterm}
%    \begin{aligned}
%        \mathbf{E}\| & Cx(k)+Du(k)\|^2 =\\
%        \mathbf{E}\| & C(\q I-A)^{-1}w(k)-\\
%                     &[C(\q I-A)^{-1}B+D]G(\q^{-1})\q^{-1}w(k)\|^2=\\
%        \| & C(\q I-A)^{-1}-
%                     [C(\q I-A)^{-1}B+D]G(\q^{-1})\q^{-1}\|^2=\\
%        \| & (\q I-A^T)^{-1}C^T-\\
%                     & G^T(\q^{-1})\q^{-1}[B^T(\q
%                     I-A^T)^{-1}C^T+D^T]\|^2=\\
%        \mathbf{E}\| & (\q I-A^T)^{-1}C^T\bar{w}(k)-\\
%                     & G^T(\q^{-1})\q^{-1}[B^T(\q I-A^T)^{-1}C^T+D^T]\bar{w}(k)\|^2
%    \end{aligned}
%\end{equation}
%{\small
\begin{equation}
\label{quadraticterm}
    \begin{aligned}
        \mathbf{E}\|  Cx(t)+Du(t)\|^2 &= \mathbf{E}\left\| C(\q I-A)^{-1}w(t)-[C(\q I-A)^{-1}B+D]G(\q^{-1})\q^{-1}w(t)\right\|^2\\
        &= \left\|  C(\q I-A)^{-1} - [C(\q I-A)^{-1}B+D]G(\q^{-1})\q^{-1}\right\|^2\\
        &= \left\|  (\q I-A^T)^{-1}C^T - G^T(\q^{-1})\q^{-1}[B^T(\q I-A^T)^{-1}C^T+D^T]\right\|^2\\
        &= \mathbf{E} \left\|  (\q I-A^T)^{-1}C^T\bar{w}(t) - 
        								G^T(\q^{-1})\q^{-1}[B^T(\q I-A^T)^{-1}C^T+D^T]\bar{w}(t)\right\|^2,
    \end{aligned}
\end{equation}
%}
where the third equality is obtained from transposing which
doesn't change the value of the norm. 
Introduce the state space equation
\begin{equation}
    \begin{aligned}
         \bar{x}(t+1) &= A^T\bar{x}(t)+C^T\bar{w}(t)\\
         y(t)   &= B^T\bar{x}(t)+D^T\bar{w}(t)\\
    \end{aligned}
\end{equation}
and let $$\hat{x}(t)=G^T(\q^{-1})y(t-1).$$ Then comparing with
(\ref{quadraticterm}), we see that
\begin{equation}
    \begin{aligned}
        \mathbf{E}\|Cx(t)+Du(t)\|^2 &= \mathbf{E}\|\bar{x}(t)-\hat{x}(t)\|^2
                                    = \sum_{i=1}^{N}\mathbf{E}\|\bar{x}_i(t)-\hat{x}_i(t)\|^2.
    \end{aligned}
\end{equation}
The solution of the control problem described as a feedforward
problem, $G(\q^{-1})\in S_\mathcal{A}^{m\times n}$, is equal to $L^T(\q^{-1})\in S_{\mathcal{A}^T}^{n\times m}$, 
where $L(\q^{-1})$ is the solution of the corresponding dual estimation problem. 
\end{proof}

We have transformed the feedforward control problem to an estimation
problem, where the parameters of the estimation problem are the
transposed parameters of the control problem:
\begin{equation}
    \begin{aligned}
        &A \leftrightarrow A^T\\
        &B \leftrightarrow C^T\\
        &C \leftrightarrow B^T\\
        &D \leftrightarrow D^T
    \end{aligned}
\end{equation}

Note that we can have a distributed estimation problem with 
controller of the form $u(t) = K(\q^{-1})y(t-1) $ with  $K(\lambda)\in S_{\mathcal{A}}^{p\times m}$:
\begin{equation}
\label{estimationPC}
    \begin{aligned}
        \inf_{L(\lambda)\in S_{\mathcal{A}}^{n\times m}}\hspace{2mm} & \lim_{M\rightarrow \infty} \frac{1}{M}\sum_{t=1}^M \sum_{i=1}^N \mathbf{E} \|x_i(t)-\hat{x}_i(t)\|^2 \\
        \text{subject to }\hspace{2mm}
        & x(t+1) = Ax(t)+B_1w(t)+B_2u(t) \\
        & y(t)=Cx(t)+Dw(t)\\
        & B_2 = \mathbf{diag}(B_{11}, ..., B_{NN})\\
        & C= \mathbf{diag}(C_{11}, ..., C_{NN})\\
        &\hat{x}(t)=\sum_{s=0}^{\infty}l(s) y(t-1-s)\\
        & w(t)=x(t)=x(0)=0\hspace{2mm} \text{for all } t < 0\\%,\hspace{2mm}
        & w(t)\sim \mathcal{N}(0,I)\hspace{2mm} \text{for all } t\geq 0
    \end{aligned}
\end{equation}
By considering the controller $u(t)$ as a propagating mean, the problem (\ref{estimationPC}) 
is essentially the same as (\ref{estimationP}) (compare with the centralized Kalman Filter).
%===============================================================================
\section{The Optimal Controller and Estimator}
\label{contribution}
Since the distributed control and estimation problems are dual, we will show how to find the optimal distributed estimator (and hence the optimal distributed state-feedback controller by just transposing the optimal distributed estimator). In particular, we will present two examples of three interconnected systems with both \textit{sparsity and delayed} measurements. First we consider an acyclic graph and then a connected graph.  Connected graphs possess a property of common information that is absent in acyclic graphs. Naturally, any graph can be written as clusters of connected graphs, interconnected over an acyclic graph, and these can be put together using our framework.

\subsection{Optimal Distributed Estimators}
Consider the estimation problem given by (\ref{estimationP}) (problem  (\ref{estimationPC}) can be treated similarly). 
It can be decomposed into $N$ \textit{decoupled} and \textit{centralized} estimation problems according to
%{\small
\begin{equation}
\label{distr_estimationP}
    \begin{aligned}
        \inf_{l_i(s), s\geq 0}\hspace{2mm} & \lim_{M\rightarrow \infty} \frac{1}{M}\sum_{t=1}^M \mathbf{E} \|x_i(t)-\hat{x}_i(t)\|^2 \\
        \text{subject to }\hspace{2mm}
        & x(t+1) = Ax(t)+Bw(t)\\
        & y(t)=Cx(t)+Dw(t)\\                
        & C= \text{diag}(C_{11}, ..., C_{NN})\\
        &\hat{x}_i(t)=\sum_{s=0}^{\infty}l_i(s) y(t-1-s)\\
        & l_{ij}(s) = 0 \hspace{3mm} \text{if } [\mathcal{A}^s]_{ij}=0, s\geq 0\\
        & w(t)=x(t)=x(0)=0\hspace{2mm} \text{for all } t < 0\\%,\hspace{2mm}
        & w(t)\sim \mathcal{N}(0,I)\hspace{2mm} \text{for all } t\geq 0
    \end{aligned}
\end{equation}
%}
for $i=1, ..., N$. By introducing the augmented vector of delayed measurements $Y(t-1)=(y(t-1), y(t-2), ..., y(t-N))$, the optimal solution is the optimal Kalman filter with respect to a \textit{subset of blocks} of the augmented vector $Y(t-1)$, which is defined by the structure of $l_i(s), s\geq 0$.

%----------------------------------------------------------------------------------------------

We will illustrate how to obtain a state-space solution to the optimal distributed filtering problem 
for the case of three interconnected systems over two different graphs. By duality (Theorems \ref{equivalencethm} and  \ref{estimationthm}), it is equivalent to finding the state-space solution for the distributed optimal control problem. 
The interconnection is  defined by the system matrix 
%{\small
$$
A=\left[
		\begin{matrix}
			A_{11} 	& A_{12} 	& A_{13}\\
			A_{21}	& A_{22} 	& A_{23}\\
			A_{31}	& A_{32}	& A_{33}
		\end{matrix}
		\right].
$$
%}
%\begin{ex}
First consider three interconnected systems over a chain, given by the state-space realization
%{\small
\begin{equation}
	\begin{aligned}
		\left[
		\begin{matrix}
			x_1(t+1)\\
			x_2(t+1)\\
			x_3(t+1)
		\end{matrix}
		\right] &= \left[
		\begin{matrix}
			A_{11} 	& A_{12} 	& 0\\
			0		& A_{22} 	& A_{23}\\
			0		& 0			& A_{33}
		\end{matrix}
		\right] \left[
		\begin{matrix}
			x_1(t)\\
			x_2(t)\\
			x_3(t)
		\end{matrix}
		\right] +\left[
		\begin{matrix}
			B_1\\
			B_2\\
			B_3
		\end{matrix}
		\right] w(t) \\
		 \left[
		\begin{matrix}
			y_1(t)\\
			y_2(t)\\
			y_3(t)
		\end{matrix}
		\right] & = \left[
		\begin{matrix}
			C_{11} 	& 0		& 0\\
			0		& C_{22} 	& 0\\
			0		& 0		& C_{33}
		\end{matrix}
		\right] \left[
		\begin{matrix}
			x_1(t)\\
			x_2(t)\\
			x_3(t)
		\end{matrix}
		\right] +\left[
		\begin{matrix}
			D_1\\
			D_2\\
			D_3
		\end{matrix}
		\right] w(t).
	\end{aligned}
\end{equation}
%}
The adjacency matrix of the interconnection is graph is given by $\mathcal{A}$, and we have
$$
\mathcal{A}^0= \left[
		\begin{matrix}
			1 	& 0 	& 0\\
			0	& 1 	& 0\\
			0	& 0	& 1
		\end{matrix}
		\right], \hspace{3mm}
\mathcal{A}= \left[
		\begin{matrix}
			1 	& 1 	& 0\\
			0	& 1 	& 1\\
			0	& 0	& 1
		\end{matrix}
		\right], \hspace{3mm} 
	\mathcal{A}^2 = \left[
		\begin{matrix}
			1 	& 2 	& 1\\
			0	& 1 	& 2\\
			0	& 0	& 1
		\end{matrix}
		\right], \hspace{3mm} 
		\mathcal{A}^s = \left[
		\begin{matrix}
			* 	& * 	& *\\
			0	& * 	& *\\
			0	& 0	& *
		\end{matrix}
		\right]\hspace{3mm}  \forall s>2,
$$ 
where the stars stand for positive integers.		
The condition $ [\mathcal{A}^s]_{ij}=0\Rightarrow l_{ij}(s)=0$ 
implies that the information available to estimate $\hat{x}_1(t)$ is given by 		
$y_1(t-1-s)$ for all $s\geq 0$ (since $[\mathcal{A}^s]_{11}=1, \forall s\geq 0$), $y_2(t-1-s)$ for all $s\geq 1$  
(since $[\mathcal{A}^s]_{12}=1, \forall s\geq 1$), and $y_3(t-1-s)$ for all $s\geq 2$  (since $[\mathcal{A}^s]_{13}=1, \forall s\geq 2$). The problem of estimating $x_1(t)$ based on information induced by the sparisity structure of $\mathcal{A}$ can be written as 
a centralized estimation problem, by an algebraic lifting, with respect to the \textit{extended} system dynamics

%{\small
\begin{equation}
\label{extended}
	\begin{aligned}
		x_e(t)=\left[
		\begin{matrix}
			x_1(t)\\
			x_2(t)\\
			x_3(t)\\
			y_1(t-1)\\
			y_2(t-1)\\
			y_3(t-1)\\
			y_1(t-2)\\
			y_2(t-2)\\
			y_3(t-2)
		\end{matrix}
		\right] &= \left[
		\begin{matrix}
			A_{11} 	& A_{12} 	& 0 			& 0 	& 0 	& 0 	& 0 	& 0 	& 0\\
			0		& A_{22} 	& A_{23}		& 0 	& 0 	& 0 	& 0 	& 0 	& 0\\
			0		& 0			& A_{33}		& 0 	& 0 	& 0 	& 0 	& 0 	& 0\\
			C_{11} 	& 0		 	& 0 			& 0 	& 0 	& 0 	& 0 	& 0 	& 0\\
			0		& C_{22} 		& 0			& 0 	& 0 	& 0 	& 0 	& 0 	& 0\\
			0		& 0			& C_{33}		& 0 	& 0 	& 0 	& 0 	& 0 	& 0\\
			0	 	& 0		 	& 0 			& I 	& 0 	& 0 	& 0 	& 0 	& 0\\
			0		& 0	 		& 0			& 0 	& I 	& 0 	& 0 	& 0 	& 0\\
			0		& 0			& 0			& 0 	& 0 	& I 	& 0 	& 0 	& 0
		\end{matrix}
		\right] \left[
		\begin{matrix}
			x_1(t-1)\\
			x_2(t-1)\\
			x_3(t-1)\\
			y_1(t-2)\\
			y_2(t-2)\\
			y_3(t-2)\\
			y_1(t-3)\\
			y_2(t-3)\\
			y_3(t-3)
		\end{matrix}
		\right] +\left[
		\begin{matrix}
			B_1\\
			B_2\\
			B_3\\
			D_1\\
			D_2\\
			D_3\\
			0\\
			0\\
			0
		\end{matrix}
		\right] w(t-1) 
		\end{aligned}
\end{equation}
\begin{equation}
	\begin{aligned}
		y_e^1(t-1) & = \left[
		\begin{matrix}
			C_{11} 	& 0		 	& 0 			& 0 	& 0 	& 0 	& 0 	& 0 	& 0\\
			0		& 0 			& 0			& 0 	& 0 	& 0 	& 0 	& 0 	& 0\\
			0		& 0			& 0			& 0 	& 0 	& 0 	& 0 	& 0 	& 0\\
			0	 	& 0		 	& 0 			& I 	& 0 	& 0 	& 0 	& 0 	& 0\\
			0		& 0	 		& 0			& 0 	& I 	& 0 	& 0 	& 0 	& 0\\
			0		& 0			& 0			& 0 	& 0 	& 0 	& 0 	& 0 	& 0\\		
			0	 	& 0		 	& 0 			& 0	& 0 	& 0 	& I 	& 0 	& 0\\
			0		& 0	 		& 0			& 0 	& 0 	& 0 	& 0 	& I 	& 0\\
			0		& 0			& 0			& 0 	& 0 	& 0 	& 0 	& 0 	& I
			\end{matrix}
		\right]\left[
		\begin{matrix}
			x_1(t-1)\\
			x_2(t-1)\\
			x_3(t-1)\\
			y_1(t-2)\\
			y_2(t-2)\\
			y_3(t-2)\\
			y_1(t-3)\\
			y_2(t-3)\\
			y_3(t-3)
		\end{matrix}
		\right]+\left[
		\begin{matrix}
			D_1\\
			0\\
			0\\
			0\\
			0\\
			0\\
			0\\
			0\\
			0
		\end{matrix}
		\right] w(t-1) 
		\end{aligned}
\end{equation}
%}
The optimal estimate of $x_1(t)$ based on the output $y_e^1(t-1)$ can be obtained from the optimal estimate of $x_e(t)$ based on the output $y_e^1(t-1)$. The computation of the optimal (Kalman) filters is routine and hence omitted here  (consult e. g. \cite{astrom:1970}). 

In a similar way, one can find the
optimal estimates of $x_2(t)$ and $x_3(t)$ based on the corresponding outputs $y_e^2(t-1)$ and $y_e^3(t-1)$. The information available to estimate $\hat{x}_2(t)$ will be $y_2(t-1-s)$ for all $s\geq 0$  (since $[\mathcal{A}^s]_{22}=1, \forall s\geq 0$), and $y_3(t-1-s)$ for all $s\geq 1$  (since $[\mathcal{A}^s]_{23}=1, \forall s\geq 1$). 
$[\mathcal{A}^s]_{21}=0, \forall s\geq 0$, and hence, no measurements of $y_1$ are available.
Finally, the estimate $\hat{x}_3(t)$ will be only based on $y_3(t-s-1)$, $s\geq 0$, since $[\mathcal{A}^s]_{3j}=0$ for $j=1,2$, and $s\geq 0$.
		
Now modify the system matrix $A$ by letting the lower left block matrix $A_{31}\neq 0$. This
implies that we have a cycle of three interconnected systems. Now we get
$$
\mathcal{A}^0= \left[
		\begin{matrix}
			1 	& 0 	& 0\\
			0	& 1 	& 0\\
			0	& 0	& 1
		\end{matrix}
		\right], \hspace{3mm}
\mathcal{A}= \left[
		\begin{matrix}
			1 	& 1 	& 0\\
			0	& 1 	& 1\\
			1	& 0	& 1
		\end{matrix}
		\right], \hspace{3mm} 
	\mathcal{A}^2 = \left[
		\begin{matrix}
			1 	& 2 	& 1\\
			1	& 1 	& 2\\
			2	& 1	& 1
		\end{matrix}
		\right], \hspace{3mm} 
		\mathcal{A}^s = \left[
		\begin{matrix}
			* 	& * 	& *\\
			*	& * 	& *\\
			*	& *	& *
		\end{matrix}
		\right]\hspace{3mm}  \forall s>2.
$$ 
Just as before, the stars stand for positive integers.
Note that the information structure is symmetric (the interconnection graph is symmetric).
Compare with the information structure over a chain. This is a fundamental difference between
cyclic and acyclic graphs. For the cyclic ones, there is a common past (which is 3-steps delayed measurements in the three systems case above), whereas for the acyclic ones, this property is lacking. The property of common past has been used in \cite{rantzer:acc06}.
Nevertheless, the solution structure is the same using our approach, independent of the graphs being cyclic or not. 

In general, we can write the dynamical system in (\ref{distr_estimationP}) as an extended system

\begin{align}
	x_e(t+1) &= A_e x_e(t) + B_e w(t) \label{extended1} \\
	y_e(t)  &= E x_e(t) + D_e w(t) \label{extended2}
\end{align}
where system $i$ measures block component $y_e^i(t)$. The optimal Kalman filter $L_i(\q^{-1})$ in the stationary case
is given by

\begin{align}
	\hat{x}_e(t+1) &= A_e \hat{x}_e(t) + K_i\tilde{y}_e ^i(t)+B_e w(t) \label{estimatei}\\
	\tilde{x}_e(t)    &= x_e(t) - \hat{x}_e(t)\\
	\tilde{y}_e^i(t)  &= E_i \tilde{x}_e(t) + [D_e]_i w(t) \label{innov}
\end{align}

%========================================================================
\subsection{Discussion on the Optimal Distributed Controller Structure}
The optimal filter 
$L(\lambda)\in S_{\mathcal{A}}^{n\times m}$
can be written in terms of its rows
$$
L=\left[
  \begin{matrix}
    L_1\\
    \vdots\\
    L_N
  \end{matrix}
  \right],
$$
where $L_i$ is the optimal estimator of the state $x_i(t)$. $L_i$ has the state space realization:
\begin{equation}
\label{Fi2}
\sys {A_e-K_iE_i} {K_i} {\Gamma_i} 0 ,
\end{equation}
with
\begin{equation*}
\Gamma_i=
\left[
  \begin{matrix}
    0 & \cdots & 0 & I & 0 & \cdots & 0
  \end{matrix}
\right],
\end{equation*}
where the identity matrix $I$ in $\Gamma_i$ is in block position $i$, and 
$K_i$ is the optimal Kalman gain.
For instance, comparing with the problem of estimating $x_1(t)$ 
subject to the extended system (\ref{extended}), we have

\begin{equation*}
A_e = \left[
		\begin{matrix}
			A_{11} 	& A_{12} 	& 0 			& 0 	& 0 	& 0 	& 0 	& 0 	& 0\\
			0		& A_{22} 	& A_{23}		& 0 	& 0 	& 0 	& 0 	& 0 	& 0\\
			0		& 0			& A_{33}		& 0 	& 0 	& 0 	& 0 	& 0 	& 0\\
			C_{11} 	& 0		 	& 0 			& 0 	& 0 	& 0 	& 0 	& 0 	& 0\\
			0		& C_{22} 		& 0			& 0 	& 0 	& 0 	& 0 	& 0 	& 0\\
			0		& 0			& C_{33}		& 0 	& 0 	& 0 	& 0 	& 0 	& 0\\
			0	 	& 0		 	& 0 			& I 	& 0 	& 0 	& 0 	& 0 	& 0\\
			0		& 0	 		& 0			& 0 	& I 	& 0 	& 0 	& 0 	& 0\\
			0		& 0			& 0			& 0 	& 0 	& I 	& 0 	& 0 	& 0
		\end{matrix}
		\right] ,
\end{equation*}

$$
E_1 = \left[
		\begin{matrix}
			C_{11} 	& 0		 	& 0 			& 0 	& 0 	& 0 	& 0 	& 0 	& 0\\
			0		& 0 			& 0			& 0 	& 0 	& 0 	& 0 	& 0 	& 0\\
			0		& 0			& 0			& 0 	& 0 	& 0 	& 0 	& 0 	& 0\\
			0	 	& 0		 	& 0 			& I 	& 0 	& 0 	& 0 	& 0 	& 0\\
			0		& 0	 		& 0			& 0 	& I 	& 0 	& 0 	& 0 	& 0\\
			0		& 0			& 0			& 0 	& 0 	& 0 	& 0 	& 0 	& 0\\		
			0	 	& 0		 	& 0 			& 0	& 0 	& 0 	& I 	& 0 	& 0\\
			0		& 0	 		& 0			& 0 	& 0 	& 0 	& 0 	& I 	& 0\\
			0		& 0			& 0			& 0 	& 0 	& 0 	& 0 	& 0 	& I
			\end{matrix}
		\right],
$$
and
\begin{equation*}
\Gamma_1=
\left[
  \begin{matrix}
    I & 0 & & \cdots & 0
  \end{matrix}
\right].
\end{equation*}

For $G = L^T$, we get
$$
G=
\left[
  \begin{matrix}
    L_1^T & L_2^T & \cdots & L_N^T
  \end{matrix}
\right] .
$$
Now let
$$
w=
\left[
  \begin{matrix}
    w_1\\ w_2\\ \vdots \\ w_N
  \end{matrix}
\right] .
$$
Then 
\begin{equation*}
  \begin{aligned}
    u(t) &= -G(\q^{-1})w(t-1)\\
      &= -\sum_{i=1}^N L_i^T(\q^{-1})w_i(t-1).
  \end{aligned}
\end{equation*}
We can see that the controller can be written as the sum of
$N$ controllers, $u(t)=\sum_{i=1}^N u_i(t)$, with $u_i(t)=-F_i^T(\q^{-1})w_i(t-1)$ as the
the feedback law with respect to the disturbance $w_i$ entering system $i$.
Taking the transpose of (\ref{Fi2}) gives the state space
realization of $F_i^T$:
\begin{equation}
\label{FiT}
\sys {A_e^T-E_i^TK_i^T} {\Gamma_i^T} {K_i^T} 0 .
\end{equation}
Let
\begin{equation*}
\Sigma_i:= \begin{array}{ll}
\begin{aligned}
z_i(t+1) =A_e^Tz_i(t)+E_i^Tu_i(t)+\Gamma_iw_i(t).
\end{aligned}
\end{array}
\end{equation*}
It is easy to verify that $u_i(t)=-K_i^Tz_i(t)$ and
$u_i(t)=-F_i^T(\q^{-1})w_i(t-1)$ are equivalent. Hence, the optimal distributed
controller  $u_i(t)=-F_i^T(\q^{-1})w_i(t-1)$ is equivalent to the 
state feedback controller, with respect to the mode generated by
$w_i(t), w_i(t-1), ...$, for $i=1, 2, ..., N$. 
%========================================================================
\section{Generalized Distributed Estimation}
Let $W\in \mathbb{S}_{++}^n$, and consider the \emph{weighted} distributed estimation problem
\begin{equation}
\label{gen_estimationP}
    \begin{aligned}
        \inf_{L(\lambda)\in S_{\mathcal{A}}^{n\times m}}\hspace{2mm} & \lim_{M\rightarrow \infty} \frac{1}{M}\sum_{t=1}^M \mathbf{E} (x(t)-\check{x}(t))^T W (x(t)-\check{x}(t)) \\
        \text{subject to }\hspace{2mm}
        & x(t+1) = Ax(t)+Bw(t)\\
        & y(t)=Cx(t)+Dw(t)\\
        & C= \text{diag}(C_{11}, ..., C_{NN})\\
        &\check{x}(t)=\sum_{s=0}^{\infty}l(s) y(t-1-s)\\
        & w(t)=x(t)=x(0)=0\hspace{2mm} \text{for all } t < 0\\%,\hspace{2mm}
        & w(t)\sim \mathcal{N}(0,I)\hspace{2mm} \text{for all } t\geq 0
    \end{aligned}
\end{equation}
Note that the case $W=I$ reduces to (\ref{estimationP}). The matrix $W$ introduces coupling between
the estimators, so the problem can't be solved through separation as in (\ref{estimationP}). This problem
has been solved for the continuous time case in \cite{barta:sandell}. 
We will give the discrete time analogue following the same proof technique as in \cite{barta:sandell}. It can be seen as an abstraction of the Kalman filter, where the projection theorem of linear algebra
is used sequentially.

First, write the dynamical system in (\ref{gen_estimationP}) on the form (\ref{extended1})-(\ref{extended2}) and  introduce the extended linear dynamical system

\begin{align*}
 X(k+1) &= \mathcal{A}X(k) + \mathcal{B} \mathcal{W}(k)\\
 Y(k) &= \mathcal{C} X(k) + \mathcal{D} \mathcal{W}(k)\\
\end{align*}
where
\begin{align}
\label{extend}
 X(k) &= \mathbf{diag}(x_e(k), ..., x_e(k))\\
  Y(k) &= \mathbf{diag}(y_e^1(k), ..., y_e^N(k))\\
 \mathcal{W}(k) &= \mathbf{diag}(w(k), ..., w(k))\\
    \mathcal{A} &= \mathbf{diag}(A_e, ..., A_e)\\
   \mathcal{B} &= \mathbf{diag}(B_e, ..., B_e)\\
    \mathcal{C} &= \mathbf{diag}(E_1, ..., E_N)\\
   \mathcal{D}  &= \mathbf{diag}([D_e]_1, ..., [D_e]_N)
\end{align}
Then, since $\mathcal{W}(k)$ is white noise, it follows that it is 
$W$-white noise. According to Proposition \ref{equi} in Section \ref{teams}, we can equivalently 
consider the cost
$$
\|X(t) - S(t)\|_W^2
$$
instead of 
$$
\mathbf{E} (x(t)-\check{x}(t))^T W (x(t)-\check{x}(t))
$$
in  the optimization problem (\ref{gen_estimationP}), where $S(t)$ is a causal linear operator 
with column $S_i(t)$ depending only on the output measurements of controller $i$ up to time $t-1$, 
which we will call $\mathcal{Y}_i^{t-1}$. Let $\mathbf{S}^t$ be the space of all causal linear operators $S(t)$ such that $S_i(t)$ depends only $\mathcal{Y}_i^{t-1}$.  
Define $\hat{X}(t)$ and $\tilde{X}(t)$ as

$$
\hat{X}(t) = \arg \min_{S(t)\in\mathbf{S}^t} \|X(t) - S(t)\|_W^2,
$$

$$
\hat{Y}(t) = \mathcal{C}\hat{X}(t),
$$
and
\begin{align*}
\tilde{X}(t) &= X(t) -\hat{X}(t)\\
\tilde{Y}(t) &= Y(t) -\hat{Y}(t)\\ &= \mathcal{C}\tilde{X}(t)+\mathcal{D}\mathcal{W}(t).
\end{align*}
 We have that $\tilde{X}(0) =0$ and $\tilde{X}(0) = X(0)$. $\mathcal{W}(t)$ is orthogonal to the state history
$X(t), X(t-1), ...$, so it's $W$-orthogonal to $\mathbf{S}^t$.
Proposition \ref{orth} gives that $\tilde{X}(t)$ is $W$-orthogonal to $\mathbf{S}^t$, 
so $\tilde{Y}(t)$ is $W$-orthogonal to $\mathbf{S}^t$.
In addition, it follows that $\tilde{Y}(t)$ is
$W$-white noise.
Now introduce
$$
\hat{X}(t+1) = \mathcal{A}\hat{X}(t) + \tilde{S}(k)  .
$$
Then, 
\begin{align*}
\|X(t+1) - \hat{X}(t+1)\|_W^2 
&=\| \mathcal{A}X(t) + \mathcal{B} \mathcal{W}(t) - \hat{X}(t+1)\|_W^2\\
&= \|\mathcal{A}X(t)  - \hat{X}(t+1)\|_W^2  + \|\mathcal{B} \mathcal{W}(t) \|_W^2\\
&= \|\mathcal{A}\hat{X}(t) + \mathcal{A}\tilde{X}(t)  - \hat{X}(t+1)\|_W^2  + \|\mathcal{B} \mathcal{W}(t) \|_W^2\\
&= \|\mathcal{A}\tilde{X}(t) - \tilde{S}(t)\|_W^2 + \|\mathcal{B} \mathcal{W}(t) \|_W^2
\end{align*}
Now combining propositions \ref{equi} and \ref{lin}, we get

$$
K(t) \tilde{Y}(t) = \arg \min_{\tilde{S}(t) \in \mathcal{D}^{t+1}} \|\mathcal{A}\tilde{X}(t) - \tilde{S}(t)\|_W^2,
$$
where
$$
K(t) =  \mathbf{E} \{\tilde{X}(t)W\tilde{Y}^T(t)\}  (\mathbf{E}  \{\tilde{Y}(t)W\tilde{Y}^T(t)\})^{-1}.
$$

Hence, we have that

\begin{align}
\hat{X}(t+1) 	&= \mathcal{A}\hat{X}(t) + K(t)\tilde{Y}(t) \nonumber\\
			&= \mathcal{A}\hat{X}(t) + K(t)(Y(t)-\hat{Y}(t)) \nonumber \\
		   	&= (\mathcal{A}-K(t)\mathcal{C})\hat{X}(t) + K(t)Y(t) \label{teamestimator}
\end{align}
and
\begin{align*}
\tilde{X}(t)   	&= (\mathcal{A}-K(t)\mathcal{C})\tilde{X}(t) - K(t)\mathcal{W}(t) + \mathcal{B}\mathcal{W}(t) 
\end{align*}
Then, the estimator (\ref{teamestimator}) can be written as $N$ separate estimators
with respect to the measurements $y_i(t)$:

\begin{align}
% \hat{X}_i(t+1) 	&= \Gamma_i \hat{X}(t+1) \nonumber\\	
% 				&= (\mathcal{A}+K(t)\mathcal{C})\Gamma_i\hat{X}(t) + K(t)\Gamma_i Y(t) \nonumber \\
\hat{X}_i(t+1)		&= (\mathcal{A}-K(t)\mathcal{C})\hat{X}_i(t) + K(t)Y_i(t). \nonumber \\
\end{align}
Hence, the estimator $\hat{X}(t)$ can be implemented in a distributed manner. 
Finally, let $\Gamma = I_{n}$ and
\begin{equation*}
\Gamma_j=
\left[
  \begin{matrix}
    0 & \cdots & 0 & I_{n_j} & 0 & \cdots & 0
  \end{matrix}
\right]  ~ ~ ~(\textup{identity matrix in block-position $j$}).
\end{equation*}
We obtain the optimal estimator $\check{x}(t)$ from $\hat{X}(t)$ by using Proposition \ref{equi} with
$L_j = \Gamma_j$. We conclude our result with the theorem below:

\begin{thm}
Consider the weighted distributed estimation problem (\ref{gen_estimationP}). Let
$\hat{X}(0)=0$ and
\begin{align*}
\hat{X}(t+1) 	&= (\mathcal{A}-K(t)\mathcal{C})\hat{X}(t) + K(t)Y(t) \\
\tilde{X}(t+1)   	&= (\mathcal{A}-K(t)\mathcal{C})\tilde{X}(t) - K(t)\mathcal{W}(t) + \mathcal{B}\mathcal{W}(t) 
\end{align*}
with
$$
K(t) =  \mathbf{E} \{\tilde{X}(t)W\tilde{Y}^T(t)\}  (\mathbf{E}  \{\tilde{Y}(t)W\tilde{Y}^T(t)\})^{-1}.
$$
Partition $\hat{X}$ in $N$ blocks of $n\times n$ matrices $[\hat{X}]_{ji}$.
Then, the optimal estimator is given by 
$$
\check{x}_i(t) = \sum_{j=1}^N = \Gamma_j \hat{X}_{ji}(t).
$$

\end{thm}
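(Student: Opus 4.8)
The plan is to reduce the weighted quadratic cost to an equivalent team projection problem in the Hilbert space $\mathcal{H}$ of Section \ref{teams}, and then to derive the recursion by the innovations method, exactly as in the classical Kalman filter but carried out under the $W$-inner product. Most of the ingredients have been assembled in the development preceding the statement, so the proof amounts to organizing them and invoking the team propositions in the correct order.

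First I would rewrite the objective. Since $\mathcal{W}(k)$ is standard white noise it is in particular $W$-white noise for every $W\in\mathbb{S}_{++}^{n}$, so by Proposition \ref{equi} the weighted cost $\mathbf{E}(x(t)-\check{x}(t))^TW(x(t)-\check{x}(t))$ may be replaced by the $W$-norm $\|X(t)-S(t)\|_W^2$, minimized over the constrained team class $\mathbf{S}^t$ in which column $i$ of $S(t)$ uses only $\mathcal{Y}_i^{t-1}$. Taking $\hat{X}(t)$ to be the minimizer, Proposition \ref{orth} shows that the residual $\tilde{X}(t)=X(t)-\hat{X}(t)$ is $W$-orthogonal to $\mathbf{S}^t$; since $\mathcal{W}(t)$ is $W$-orthogonal to the state history, the innovation $\tilde{Y}(t)=\mathcal{C}\tilde{X}(t)+\mathcal{D}\mathcal{W}(t)$ is both $W$-orthogonal to $\mathbf{S}^t$ and $W$-white.

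Next I would propagate the estimate one step. Writing $\hat{X}(t+1)=\mathcal{A}\hat{X}(t)+\tilde{S}(t)$ with $\tilde{S}(t)\in\mathcal{D}^{t+1}$ free, the dynamics $X(t+1)=\mathcal{A}X(t)+\mathcal{B}\mathcal{W}(t)$ together with the $W$-orthogonality of $\mathcal{W}(t)$ to the past gives, after substituting $\mathcal{A}X(t)=\mathcal{A}\hat{X}(t)+\mathcal{A}\tilde{X}(t)$, the decomposition
$$
\|X(t+1)-\hat{X}(t+1)\|_W^2=\|\mathcal{A}\tilde{X}(t)-\tilde{S}(t)\|_W^2+\|\mathcal{B}\mathcal{W}(t)\|_W^2,
$$
whose second term does not depend on $\tilde{S}(t)$. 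Because the new admissible information entering $\mathcal{D}^{t+1}$ is spanned by the white innovation $\tilde{Y}(t)$, combining Propositions \ref{equi} and \ref{lin} identifies the minimizer as $\tilde{S}(t)=K(t)\tilde{Y}(t)$ with $K(t)=\mathbf{E}\{\tilde{X}(t)W\tilde{Y}^T(t)\}(\mathbf{E}\{\tilde{Y}(t)W\tilde{Y}^T(t)\})^{-1}$, yielding the stated recursions for $\hat{X}(t+1)$ and for the error $\tilde{X}(t+1)$. Finally, because $\mathcal{A}$ and $\mathcal{C}$ are block diagonal, the recursion decouples across the team columns, so column $i$ of $\hat{X}$ is driven only by $Y_i$; applying Proposition \ref{equi} once more with $L_j=\Gamma_j$ recovers the optimal physical estimate $\check{x}_i(t)=\sum_{j=1}^N\Gamma_j\hat{X}_{ji}(t)$.

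The main obstacle is justifying that this sequential, one-step-at-a-time projection returns the global optimum over $\mathbf{S}^{t+1}$ rather than a merely greedy solution. This rests on showing that the admissible space $\mathcal{D}^{t+1}$ decomposes as the $\mathcal{A}$-image of $\mathbf{S}^t$ plus the single white direction $\tilde{Y}(t)$, so that minimizing the first term above over $\tilde{S}(t)$ does not disturb the estimate already committed at time $t$. This is precisely the team-theoretic analogue of the orthogonal innovations decomposition underlying the Kalman filter, and it is where the $W$-whiteness of $\tilde{Y}(t)$ and its $W$-orthogonality to $\mathbf{S}^t$ must be used together with the partially nested constraint, so that the gain $K(t)$ respects the sparsity pattern and the column-wise decoupling in the last step is legitimate.
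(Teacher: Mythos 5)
Your proposal is correct and takes essentially the same route as the paper, whose ``proof'' is precisely the development preceding the theorem statement: Proposition~\ref{equi} to pass from the weighted cost to the $W$-norm team cost over $\mathbf{S}^t$, Proposition~\ref{orth} for the $W$-orthogonality and $W$-whiteness of the innovation $\tilde{Y}(t)$, the one-step decomposition of $\|X(t+1)-\hat{X}(t+1)\|_W^2$, Propositions~\ref{equi} and~\ref{lin} to identify the gain $K(t)$, and a final application of Proposition~\ref{equi} with $L_j=\Gamma_j$ to recover $\check{x}_i(t)$. The ``main obstacle'' you flag --- global optimality of the sequential innovations projection --- is likewise not argued in the paper, which defers to the proof technique of \cite{barta:sandell}, so your write-up is at least as complete as the paper's own.
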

%\begin{proof}
%
%\end{proof}
%========================================================================
\section{Distributed State Feedback with Cross-Correlation in the Disturbance}
The distributed state feedback given by (\ref{controlP}) considers the case where 
$w(t)\sim \mathcal{N}(0,I)$, that is $w_i(t)$ is uncorrelated with $w_j(t)$ for $i\neq j$.
We will now consider a slightly different problem where $w(t)\sim \mathcal{N}(0,W)$
for a general positive definite matrix $W$:

\begin{equation}
\label{controlPW}
    \begin{aligned}
        \inf_{K(\lambda)\in S_{\mathcal{A}}^{m\times n}}\hspace{2mm} & \lim_{M\rightarrow \infty} \frac{1}{M}\sum_{t=1}^M \sum_{i=1}^N \mathbf{E} \|z_i(t)\|^2 \\
        \text{subject to }\hspace{2mm} & x(t+1) = Ax(t)+Bu(t)+w(t)\\
        & z(t)=Cx(t)+Du(t)\\
        & B= \mathbf{diag}(B_{11}, ..., B_{NN})\\
        & u(t)=\sum_{s=0}^{\infty} k(s) x(t-s)\\
        & w(t)=x(t)=x(0)=0\hspace{2mm} \text{for all } t < 0\\%,\hspace{2mm} w(k)\sim \mathcal{N}(0,I)\\
        & w(t)\sim \mathcal{N}(0,W)\hspace{2mm} \text{for all } t\geq 0 
    \end{aligned}
\end{equation}

Following the same arguments as the proof of Theorem \ref{equivalencethm}, we see that
(\ref{controlPW} is equivalent to the feedforward control problem 
\begin{equation}
\label{feedforwardW}
    \begin{aligned}
        \inf_{G(\lambda)\in S_\mathcal{A}^{m\times n}}\hspace{2mm} & \lim_{M\rightarrow \infty} \frac{1}{M}\sum_{t=1}^M \sum_{i=1}^N \mathbf{E} \|z_i(t)\|^2 \\
        \text{subject to }\hspace{2mm} & x(t+1) = Ax(t)+Bu(t)+w(t)\\
        & B= \mathbf{diag}(B_{11}, ..., B_{NN})\\
        & z(t)=Cx(t)+Du(t)\\
        & u(t)=-\sum_{s=0}^{\infty} g(s)w(t-1-s)\\
        & w(t)=x(t)=x(0)=0\hspace{2mm} \text{for all } t < 0\\%,\hspace{2mm} w(k)\sim \mathcal{N}(0,I)\\
        & w(t)\sim \mathcal{N}(0,W)\hspace{2mm} \text{for all } t\geq 0
   \end{aligned}
\end{equation}
where the only change is in that $w(t)\sim \mathcal{N}(0,W)$. It is also straightforward to apply the proof of
Theorem \ref{estimationthm} to show that the dual of (\ref{feedforwardW}) is given by
the weighted distributed estimation problem (\ref{gen_estimationP}), since the dynamics can be written as
$$
x(t+1) = Ax(t)+Bu(t)+W^{\frac{1}{2}}w(t)
$$
where $w(t)\sim \mathcal{N}(0,I)$ is white noise.
We have already seen that 
problem (\ref{gen_estimationP}) is conceptually more general for general weight matrices $W$. Therefore, 
correlation in the disturbance for distributed state feedback control changes the problem substantially.

%========================================================================
%\section{A Certainty Equivalence Principle for Output Feedback Control}
%Consider the distributed output-feedback control problem
%\begin{equation}
%\label{output}
%    \begin{aligned}
%        \inf_{K(\lambda)\in S_{\mathcal{A}}^{m\times n}}\hspace{2mm} & \lim_{M\rightarrow \infty} \frac{1}{M}\sum_{t=1}^M \sum_{i=1}^N \mathbf{E} \|z_i(t)\|^2 \\
%        \text{subject to }\hspace{2mm} & x(t+1) = Ax(t)+B_1w(t)+B_2u(t)\\
%        & y(t)  = C_1x(t) + D_1 w(t) \\
%        & z(t)  = C_2x(t)+D_2u(t)\\
%        & B_2= \mathbf{diag}(B_{11}, ..., B_{NN})\\
%        & C_2= \mathbf{diag}(C_{11}, ..., C_{NN})\\
%        & u(t)=\sum_{s=1}^{\infty} k(s) y(t-s)\\
%        & w(t)=x(t)=x(0)=0\hspace{2mm} \text{for all } t < 0\\%,\hspace{2mm} w(k)\sim \mathcal{N}(0,I)\\
%        & w(t)\sim \mathcal{N}(0,I)\hspace{2mm} \text{for all } t\geq 0 
%    \end{aligned}
%\end{equation}
%
%\begin{thm}
%\label{certeq}
%Let $\hat{x}(t)$ be the optimal solution of  (\ref{estimationPC}), given by the recursion
%$$
%\hat{x}(t+1) = A\hat{x}(t) + B_2u(t) + K
%$$
%and introduce $\tilde{x}(t) = {x}(t)-\hat{x}(t)$.
%\end{thm}
%It's fairly straightforward to convert (\ref{output}) to a problem of the form (\ref{controlPW}). To see this,
%consider the distributed estimation problem (\ref{estimationPC}). Let 

%========================================================================

\section{Conclusion}
We showed that distributed estimation and control problems are
dual under partially nested information pattern using a novel
system theoretic formulation of dynamics over graphs. We showed that the distributed estimation problem can be decomposed into $N$ separate problems that are easy to solve, and hence solve the corresponding distributed control problem due to the duality that was shown in this paper.  We considered a distributed estimation problem formulated as a dynamical team problem. We proposed a solution based on linear quadratic team decision theory, which provides a generalized Riccati equation for teams. We also showed that the weighted estimation problem is the dual to a distributed state feedback problem, where the disturbances entering the systems are correlated, and hence, a solution is obtained based on generalized Riccati equation for teams.

%We demonstrated a certainty equivalence principle for distributed output feedback control, which also provides a general architecture for partially nested systems.

%\section{Acknowledgement}
%The authors would like to thank the reviewers for useful comments.

\bibliography{../../ref/mybib}

\begin{thebibliography}{10}

\bibitem{astrom:1970}
K.~J. {\AA}str{\"o}m.
\newblock {\em Stochastic Control Theory}.
\newblock Academic Press, 1970.

\bibitem{bamieh:voulgaris:2005}
B.~Bamieh and P.~Voulgaris.
\newblock A convex characterization of distributed control problems in
  spatially invariant systems with communication constraints.
\newblock {\em Systems \& Control Letters}, 54(6):575--583, 2005.

\bibitem{barta:sandell}
S.~Barta and N.~R. Sandell.
\newblock Certainty equivalent solutions of quadratic team problems by a
  decentralized innovations approach, 1978.

\bibitem{bernhardsson:1992}
B.~Bernhardsson.
\newblock {\em Topics in Digital and Robust Control of Linear Systems}.
\newblock PhD thesis, Lund University, 1992.

\bibitem{gattami:acc09}
A.~Gattami.
\newblock Distributed estimation and control under partially nested pattern.
\newblock In {\em American Control Conference}, June 2009.

\bibitem{gattami:tac:10}
A.~Gattami.
\newblock Generalized linear quadratic control.
\newblock {\em IEEE Tran. Automatic Control}, 55(1):131--136, January 2010.

\bibitem{ho:chu}
Y.-C. Ho and K.-C. Chu.
\newblock Team decision theory and information structures in optimal control
  problems-part i.
\newblock {\em IEEE Trans. on Automatic Control}, 17(1), 1972.

\bibitem{mahajan:tatikonda:09}
A.~Mahajan and S.~Tatikonda.
\newblock A graphical modeling approach to simplifying sequential teams.
\newblock In {\em Proceedings of the 7th international conference on Modeling
  and Optimization in Mobile, Ad Hoc, and Wireless Networks}, pages 614--621,
  2009.

\bibitem{marschak:1955}
J.~Marschak.
\newblock Elements for a theory of teams.
\newblock {\em Management Sci.}, 1:127--137, 1955.

\bibitem{teneketzis_2011}
Ashutosh Nayyar, Aditya Mahajan, and Demosthenis Teneketzis.
\newblock Optimal control strategies in delayed sharing information structures.
\newblock {\em IEEE Trans. on Automatic Control}, 56(7):1606--1620, July 2011.

\bibitem{radner}
R.~Radner.
\newblock Team decision problems.
\newblock {\em Ann. Math. Statist.}, 33(3):857--881, 1962.

\bibitem{rantzer:acc06}
A.~Rantzer.
\newblock Linear quadratic team theory revisited.
\newblock In {\em ACC}, 2006.

\bibitem{rotkowitz:lall:2006}
M.~Rotkowitz and S.~Lall.
\newblock A characterization of convex problems in decentralized control.
\newblock {\em IEEE Trans. on Automatic Control}, 51(2), 2006.

\bibitem{shah}
P.~Shah and P.~Parrilo.
\newblock {$H_2$}-optimal decentralized control over posets: A state-space
  solution for state-feedback.
\newblock In {\em Conference on Decision and Control}, pages 6722--6727,
  Atlanta, Georgia, USA, December 2010.

\bibitem{stanley}
R.~Stanley.
\newblock {\em Enumerative Combinatorics, Volume I}.
\newblock Cambridge University Press, 1997.

\bibitem{witsenhausen:1968}
H.~S. Witsenhausen.
\newblock A counterexample in stochastic optimum control.
\newblock {\em SIAM Journal on Control}, 6(1):138--147, 1968.

\bibitem{yuksel-pn-09}
S.~Yuksel.
\newblock Stochastic nestedness and the belief sharing information pattern in
  decentralized control.
\newblock {\em IEEE Trans. Automatic Control}, 55:2773--2786, December 2009.

\end{thebibliography}

%------------------------------------------------------------------------
\appendix

\subsection{Graph Theory} \label{graphtheory}
A (simple) graph $\mathcal{G}$ is an
ordered pair $\mathcal{G}: = (\mathcal{V},\mathcal{E})$ where
$\mathcal{V}$ is a set, whose elements are called
\textit{vertices} or \textit{nodes}, $\mathcal{E}$ is a set of
pairs (unordered) of distinct vertices, called \textit{edges} or
\textit{lines}. The set $\mathcal{V}$ (and hence $\mathcal{E}$) is
taken to be finite in this paper. A \textit{loop} is an edge which
starts and ends with the same node.

A directed graph or digraph $\mathcal{G}$ is a graph where
$\mathcal{E}$ is a set of ordered pairs of vertices, called
\textit{directed} edges, arcs, or arrows.
An edge $e = (v_i,v_j)$ is considered to be directed from $v_i$ to $v_j$;
$v_j$ is called the head and $v_i$ is called the tail of the edge.

The \textit{adjacency matrix} of a finite directed graph
$\mathcal{G}$ on $n$ vertices is the $n\times n$ matrix where the
nondiagonal entry $\mathcal{A}_{ij}$ is the number of edges from
vertex $j$ to vertex $i$, and the diagonal entry
$\mathcal{A}_{ii}$ is the number of loops at vertex $i$ (the
number of loops at every node is defined to be one, unless another
number is given on the graph).

\end{document}